\documentclass[11pt,leqno]{article}
\usepackage[margin=1in]{geometry} 
\usepackage{amssymb,amsfonts,amsmath,bbm,mathrsfs,stmaryrd,mathtools}
\usepackage{xcolor}
\usepackage{url}

\usepackage{graphicx}

\usepackage{accents}

\usepackage{extarrows}

\usepackage[shortlabels]{enumitem}
\usepackage{tensor}

\usepackage{xr}
\usepackage[T1]{fontenc}
\usepackage[utf8]{inputenc}

\usepackage[colorlinks,
linkcolor=black!75!red,
citecolor=blue,
pdftitle={},
pdfproducer={pdfLaTeX},
pdfpagemode=None,
bookmarksopen=true,
bookmarksnumbered=true,
backref=page]{hyperref}

\usepackage{tikz}
\usetikzlibrary{arrows,calc,decorations.pathreplacing,decorations.markings,decorations.shapes,intersections,shapes.geometric,through,fit,shapes.symbols,positioning,decorations.pathmorphing}

\makeatletter
\newlength\zig@L
\newlength\zig@La
\newlength\zig@Lb

\newcommand{\xzigrightarrow}[2][]{%
  \mathrel{%
    \settowidth{\zig@La}{$\scriptstyle #2$}%
    \settowidth{\zig@Lb}{$\scriptstyle #1$}%
    \zig@L=\zig@La\relax
    \ifdim\zig@Lb>\zig@L \zig@L=\zig@Lb\fi
    \advance\zig@L by 2.2em\relax
    \tikz[baseline=-0.65ex]{%
      \draw[->,
            line cap=round,
            decorate,
            decoration={zigzag,segment length=4pt,amplitude=1.1pt}]%
        (0,0) -- (\zig@L,0)
        node[midway,above=2pt] {$\scriptstyle #2$}%
        \if\relax\detokenize{#1}\relax\else
          node[midway,below=2pt] {$\scriptstyle #1$}%
        \fi
      ;
    }%
  }%
}
\makeatother

\makeatletter
\newcommand{\squigjoin}{1mu} 

\def\sqleft@{\sim}                    
\def\sqmid@{\sim\mkern-\squigjoin}    

\def\rightsquigarrowfill@{%
  \arrowfill@{\sqleft@}{\sqmid@}{\mkern-4mu\succ}%
}

\newcommand{\xrightsquigarrow}[2][]{%
  \ext@arrow 0359\rightsquigarrowfill@{#1}{#2}%
}
\makeatother

\makeatletter
\newcommand*\circled[1]{\tikz[baseline=(char.base)]{
    \node[shape=circle, draw, inner sep=0pt, 
    minimum height={\f@size},] (char) {\vphantom{WAH1g}#1};}}
\makeatother

\makeatletter
\DeclareRobustCommand\widecheck[1]{{\mathpalette\@widecheck{#1}}}
\def\@widecheck#1#2{%
    \setbox\z@\hbox{\m@th$#1#2$}%
    \setbox\tw@\hbox{\m@th$#1%
       \widehat{%
          \vrule\@width\z@\@height\ht\z@
          \vrule\@height\z@\@width\wd\z@}$}%
    \dp\tw@-\ht\z@
    \@tempdima\ht\z@ \advance\@tempdima2\ht\tw@ \divide\@tempdima\thr@@
    \setbox\tw@\hbox{%
       \raise\@tempdima\hbox{\scalebox{1}[-1]{\lower\@tempdima\box
\tw@}}}%
    {\ooalign{\box\tw@ \cr \box\z@}}}
\makeatother

\usepackage{braket}

\usepackage[amsmath,thmmarks,hyperref]{ntheorem}
\usepackage{cleveref}

\newcommand\nthalias[1]{\AddToHook{env/#1/begin}{\crefalias{lemma}{#1}}}

\nthalias{assumption}
\nthalias{conjecture}
\nthalias{construction}
\nthalias{convention}
\nthalias{corollaryN}
\nthalias{corollary}
\nthalias{definition}
\nthalias{examples}
\nthalias{example}
\nthalias{lemma}
\nthalias{notation}
\nthalias{propositionN}
\nthalias{proposition}
\nthalias{question}
\nthalias{recollection}
\nthalias{remarks}
\nthalias{remark}
\nthalias{sketch}
\nthalias{theoremN}
\nthalias{theorem}

\creflabelformat{enumi}{#2#1#3}

\crefname{section}{Section}{Sections}
\crefformat{section}{#2Section~#1#3} 
\Crefformat{section}{#2Section~#1#3} 

\crefname{subsection}{\S}{\S\S}
\AtBeginDocument{%
  \crefformat{subsection}{#2\S#1#3}%
  \Crefformat{subsection}{#2\S#1#3}%
}

\crefname{subsubsection}{\S}{\S\S}
\AtBeginDocument{%
  \crefformat{subsubsection}{#2\S#1#3}%
  \Crefformat{subsubsection}{#2\S#1#3}%
}

\theoremstyle{plain}

\newtheorem{lemma}{Lemma}[section]
\newtheorem{proposition}[lemma]{Proposition}
\newtheorem{corollary}[lemma]{Corollary}
\newtheorem{theorem}[lemma]{Theorem}

\theoremstyle{plain}
\theoremnumbering{Alph}

\theoremstyle{plain}
\theorembodyfont{\upshape}
\theoremsymbol{\ensuremath{\blacklozenge}}

\newtheorem{definition}[lemma]{Definition}
\newtheorem{example}[lemma]{Example}

\newtheorem{remark}[lemma]{Remark}
\newtheorem{remarks}[lemma]{Remarks}

\newtheorem{recollection}[lemma]{Recollection}

\crefname{definition}{definition}{definitions}
\crefformat{definition}{#2definition~#1#3} 
\Crefformat{definition}{#2Definition~#1#3} 

\crefname{ex}{example}{examples}
\crefformat{example}{#2example~#1#3} 
\Crefformat{example}{#2Example~#1#3} 

\crefname{exs}{example}{examples}
\crefformat{examples}{#2example~#1#3} 
\Crefformat{examples}{#2Example~#1#3} 

\crefname{remark}{remark}{remarks}
\crefformat{remark}{#2remark~#1#3} 
\Crefformat{remark}{#2Remark~#1#3} 

\crefname{remarks}{remark}{remarks}
\crefformat{remarks}{#2remark~#1#3} 
\Crefformat{remarks}{#2Remark~#1#3} 

\crefname{convention}{convention}{conventions}
\crefformat{convention}{#2convention~#1#3} 
\Crefformat{convention}{#2Convention~#1#3} 

\crefname{notation}{notation}{notations}
\crefformat{notation}{#2notation~#1#3} 
\Crefformat{notation}{#2Notation~#1#3} 

\crefname{table}{table}{tables}
\crefformat{table}{#2table~#1#3} 
\Crefformat{table}{#2Table~#1#3}

\crefname{lemma}{lemma}{lemmas}
\crefformat{lemma}{#2lemma~#1#3} 
\Crefformat{lemma}{#2Lemma~#1#3} 

\crefname{proposition}{proposition}{propositions}
\crefformat{proposition}{#2proposition~#1#3} 
\Crefformat{proposition}{#2Proposition~#1#3} 

\crefname{propositionN}{proposition}{propositions}
\crefformat{propositionN}{#2proposition~#1#3} 
\Crefformat{propositionN}{#2Proposition~#1#3} 

\crefname{corollary}{corollary}{corollaries}
\crefformat{corollary}{#2corollary~#1#3} 
\Crefformat{corollary}{#2Corollary~#1#3} 

\crefname{corollaryN}{corollary}{corollaries}
\crefformat{corollaryN}{#2corollary~#1#3} 
\Crefformat{corollaryN}{#2Corollary~#1#3} 

\crefname{theorem}{theorem}{theorems}
\crefformat{theorem}{#2theorem~#1#3} 
\Crefformat{theorem}{#2Theorem~#1#3} 

\crefname{theoremN}{theorem}{theorems}
\crefformat{theoremN}{#2theorem~#1#3} 
\Crefformat{theoremN}{#2Theorem~#1#3} 

\crefname{enumi}{}{}
\crefformat{enumi}{#2#1#3}
\Crefformat{enumi}{#2#1#3}

\crefname{assumption}{assumption}{Assumptions}
\crefformat{assumption}{#2assumption~#1#3} 
\Crefformat{assumption}{#2Assumption~#1#3} 

\crefname{construction}{construction}{Constructions}
\crefformat{construction}{#2construction~#1#3} 
\Crefformat{construction}{#2Construction~#1#3} 

\crefname{sketch}{sketch}{Sketches}
\crefformat{sketch}{#2sketch~#1#3} 
\Crefformat{sketch}{#2Sketch~#1#3} 

\crefname{recollection}{recollection}{Recollections}
\crefformat{recollection}{#2recollection~#1#3} 
\Crefformat{recollection}{#2Recollection~#1#3} 

\crefname{question}{question}{Questions}
\crefformat{question}{#2question~#1#3} 
\Crefformat{question}{#2Question~#1#3} 

\crefname{equation}{}{}
\crefformat{equation}{(#2#1#3)} 
\Crefformat{equation}{(#2#1#3)}

\numberwithin{equation}{section}

\theoremstyle{nonumberplain}
\theoremsymbol{\ensuremath{\blacksquare}}

\newtheorem{proof}{Proof}
\newcommand\pf[1]{\newtheorem{#1}{Proof of \Cref{#1}}}

\newcommand\bG{{\mathbb G}}
\newcommand\bH{{\mathbb H}}

\newcommand\bK{{\mathbb K}}

\newcommand\bS{{\mathbb S}}
\newcommand\bT{{\mathbb T}}
\newcommand\bU{{\mathbb U}}

\newcommand\bZ{{\mathbb Z}}

\newcommand\cA{{\mathcal A}}
\newcommand\cB{{\mathcal B}}

\newcommand\cE{{\mathcal E}}
\newcommand\cF{{\mathcal F}}
\newcommand\cG{{\mathcal G}}

\newcommand\cM{{\mathcal M}}

\newcommand\cO{{\mathcal O}}
\newcommand\cP{{\mathcal P}}

\newcommand\cU{{\mathcal U}}

\newcommand\cW{{\mathcal W}}
\newcommand\cX{{\mathcal X}}

\newcommand\ol{\overline}
\newcommand\wt{\widetilde}

\DeclareMathOperator{\Ad}{Ad}

\newcommand{\Rep}[0]{\cat{Rep}}

\newcommand{\cat}[1]{\textsc{#1}}

\newcommand\spr[1]{\cite[\href{https://stacks.math.columbia.edu/tag/#1}{Tag {#1}}]{stacks-project}}
\newcommand{\qedhere}{\mbox{}\hfill\ensuremath{\blacksquare}}

\newcommand{\comment}[1]{}

\newcommand{\xrightarrowdbl}[2][]{%
  \xrightarrow[#1]{#2}\mathrel{\mkern-14mu}\rightarrow
}

\title{Equivariant compactifications, trivial embeddability and finite type}
\author{Alexandru Chirvasitu}

\begin{document}

\date{}

\newcommand{\Addresses}{{
  \bigskip
  \footnotesize

  \textsc{Department of Mathematics, University at Buffalo}
  \par\nopagebreak
  \textsc{Buffalo, NY 14260-2900, USA}  
  \par\nopagebreak
  \textit{E-mail address}: \texttt{achirvas@buffalo.edu}

}}

\maketitle

\begin{abstract}
  We characterize finite-type $\mathbb{G}$-principal $\mathbb{U}$-equivariant bundles on normal $\mathbb{U}$-spaces for compact Lie groups $\mathbb{U}$ and $\mathbb{G}$, in several ways, including (a) their extensibility across the $\mathbb{U}$-equivariant compactification $\beta_{\mathbb{U}}X$ and (b) their becoming finite-type upon extending the structure group along at least one $\mathbb{U}$-equivariant compact-Lie-group embedding $\mathbb{G}\le \mathbb{K}$. This generalizes non-equivariant results of Phillips and the author's characterizing finite-type matrix-algebra bundles, upon specializing $\mathbb{G}$ to projective unitary groups. 

  When the $\mathbb{U}$-action on $X$ has virtually abelian isotropy, matrix-algebra equivariant bundles are also finite-type precisely when, locally over a finite open $\mathbb{U}$-cover, they are tensor factors of trivial matrix bundles. In a $K$-theoretic offshoot we prove that for $\mathbb{U}$-actions with finite isotropy groups on compact Hausdorff spaces $X$ equivariant vector bundles $\mathcal{E}\to X$ are factors of trivial bundles $K$-theoretically: there is a class $a\in K_{\mathbb{U}}(X)$ with $[\mathcal{E}]a$ the class of a bundle induced by a $\mathbb{U}$-representation (which furthermore can be chosen so as to restrict to isotropy groups to multiples of the regular representations). This generalizes a result of Donovan and Karoubi.  
\end{abstract}

\noindent \emph{Key words:
  $C^*$ bundle;
  equivariant bundle;
  equivariant compactification;
  finite type;
  local object;
  locally trivial;
  principal bundle;
  structure group
}

\vspace{.5cm}

\noindent{MSC 2020: 55R91; 55R10; 55R25; 22D10; 19L47; 46M20; 54D35; 55P91
  
}

\tableofcontents

\section*{Introduction}

The paper is concerned with \emph{$\bU$-equivariant bundles} (or \emph{bundles$_{\bU}$}, following the language of \cite[Definition 0.1]{MR5060733}) in the sense of \cite[\SS I.8, I.9]{td_transf-gp}: bundles $\cE\xrightarrow{} X$ over a base space, either $\bG$-principal for a topological (virtually exclusively compact Lie) group $\bG$ or vector/algebra/$C^*$-algebra/etc., with all maps and structures involved equivariant with respect to $\bU$-actions by the topological (again always compact Lie) group $\bU$. The notion is thus broader than, say, \cite[\S 1]{MR650393} (where the action $\bU\circlearrowright \bG$ is trivial) and narrower than \cite[pp.265-266]{MR885537} (or \cite[\S VII.1]{may_eq-hotop_1996}). 

Recall \cite[Definition I.9.1]{td_transf-gp} that \emph{finite-type} bundles are those trivializable over finite open covers (by invariant open subsets if working equivariantly). \Cref{th:ft.iff.ext.cpct} is in large part motivated by \cite[Proposition 2.9]{zbMATH05172034}, giving a number of equivalent characterizations for finite-type matrix-algebra bundles over normal spaces\footnote{The statement only assumes $T_{3\frac 12}$ separation, but see the caveats in \cite[Remark 1.11(1)]{2605.10752v1}.}. Among these are
\begin{itemize}[wide]
\item having finite type as a plain vector bundle;  

\item and extending across some \emph{compactification} of the base space (equivalently, the largest such: $\beta X$, the \emph{Stone-\v{C}ech compactification} of \cite[\S 6.2]{gj_rings}, say). 
\end{itemize}
\Cref{th:ft.iff.ext.cpct} extends this to the broader context naturally housing those characterizations: 
\begin{itemize}[wide]
\item by superimposing equivariance with respect to a compact Lie group $\bU$ operating on all objects/maps concerned;
\item and substituting for matrix/vector bundles principal bundles with compact Lie structure group $\bG$ acted upon by $\bU$. 
\end{itemize}

The statement is as follows:

\begin{theorem}\label{th:ft.iff.ext.cpct}
  Let $\bU$ be a compact Lie group acting on another $\bG$, and $X$ a $T_4$ $\bU$-space. The following conditions on a principal $\bG$-bundle$_{\bU}$ $\cP\xrightarrowdbl{}X$ are equivalent.
  \begin{enumerate}[(a),wide]
  \item\label{item:th:ft.iff.ext.cpct:ft} $\cP$ is of finite type.

  \item\label{item:th:ft.iff.ext.cpct:fin.cw} $\cP$ pulls back from a principal $\bG$-bundle$_{\bU}$ on a finite \emph{$\bU$-CW complex} \cite[\S I.3]{may_eq-hotop_1996}. 
    
  \item\label{item:th:ft.iff.ext.cpct:univ.cpct} $\cP$ pulls back from a principal $\bG$-bundle$_{\bU}$ on the universal equivariant compactification $X\lhook\joinrel\to \beta_{\bU}X$.
    
  \item\label{item:th:ft.iff.ext.cpct:cpct} $\cP$ pulls back through some equivariant compactification $X\to \ol{X}$.

  \item\label{item:th:ft.iff.ext.cpct:all.ind} The induced \cite[post Theorem I.8.15]{td_transf-gp} principal $\bK$-bundle$_{\bU}$ $\cP\times_{\bG}\bK$ is of finite type for every $\bU$-equivariant compact-Lie-group morphism $\bG\to \bK$.

  \item\label{item:th:ft.iff.ext.cpct:all.rep} $\cP\times_{\bG}V$ is of finite type for every $\bU$-equivariant (real or complex) $\bG$-representation $\bG\circlearrowright V$. 

  \item\label{item:th:ft.iff.ext.cpct:some.rep} $\cP\times_{\bG}V$ is of finite type for some faithful $\bU$-equivariant $\bG$-representation $\bG\circlearrowright V$. 
    
  \item\label{item:th:ft.iff.ext.cpct:some.ind} $\cP\times_{\bG}\bK$ is of finite type for some $\bU$-equivariant compact-Lie-group embedding $\bG\to \bK$.
  \end{enumerate}
\end{theorem}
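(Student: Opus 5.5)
I would arrange the implications in the cycle
\ref{item:th:ft.iff.ext.cpct:ft} $\Rightarrow$ \ref{item:th:ft.iff.ext.cpct:fin.cw} $\Rightarrow$ \ref{item:th:ft.iff.ext.cpct:univ.cpct} $\Rightarrow$ \ref{item:th:ft.iff.ext.cpct:cpct} $\Rightarrow$ \ref{item:th:ft.iff.ext.cpct:ft}, and then attach the structure-group conditions through
\ref{item:th:ft.iff.ext.cpct:ft} $\Rightarrow$ \ref{item:th:ft.iff.ext.cpct:all.ind} $\Rightarrow$ \ref{item:th:ft.iff.ext.cpct:all.rep} $\Rightarrow$ \ref{item:th:ft.iff.ext.cpct:some.rep} $\Rightarrow$ \ref{item:th:ft.iff.ext.cpct:some.ind} $\Rightarrow$ \ref{item:th:ft.iff.ext.cpct:ft}.

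\emph{The cycle through compactifications.}
\begin{itemize}
\item For \ref{item:th:ft.iff.ext.cpct:ft} $\Rightarrow$ \ref{item:th:ft.iff.ext.cpct:fin.cw}, take a finite invariant trivializing cover $\{U_i\}$. Since $X$ is $T_4$, it admits a subordinate invariant partition of unity: build an ordinary one, then average over the Haar measure of $\bU$. With the transition functions, this gives a classifying map in the style of Milnor's join. It is a $\bU$-map from $X$ into the $n$-fold join of copies of $\bG$, with the twisted $\bU$-action of the bundle$_\bU$ setting. This target is a compact $\bU$-space, namely a finite $\bU$-CW complex by the equivariant triangulation of compact smooth $\bU$-manifolds. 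It carries a principal $\bG$-bundle$_\bU$ pulling back to $\cP$.
\item For \ref{item:th:ft.iff.ext.cpct:fin.cw} $\Rightarrow$ \ref{item:th:ft.iff.ext.cpct:univ.cpct}, note that a finite $\bU$-CW complex $Y$ is compact. Any $\bU$-map $X\to Y$ therefore factors through $\beta_\bU X$ by its universal property, and we pull back along $\beta_\bU X\to Y$.
\item \ref{item:th:ft.iff.ext.cpct:univ.cpct} $\Rightarrow$ \ref{item:th:ft.iff.ext.cpct:cpct} is immediate.
\item For \ref{item:th:ft.iff.ext.cpct:cpct} $\Rightarrow$ \ref{item:th:ft.iff.ext.cpct:ft}, use that bundles$_\bU$ over a compact Hausdorff $\bU$-space are locally trivial over invariant neighbourhoods of orbits in the appropriate twisted sense, which is tom Dieck's local triviality via slices. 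Compactness then yields a finite invariant trivializing cover, whose preimages in $X$ do the job.
\end{itemize}
Here "trivial" must be read in the equivariant sense of \cite[I.8--I.9]{td_transf-gp}: locally the bundle is of the form $\bU\times_H(S\times \bG)$ twisted by a homomorphism. I would check that the Milnor-type construction accommodates these finitely many local models. The model space is then a finite union of compact pieces glued along partition-of-unity coordinates, and it is still compact.

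\emph{The structure-group conditions.}
\begin{itemize}
\item \ref{item:th:ft.iff.ext.cpct:ft} $\Rightarrow$ \ref{item:th:ft.iff.ext.cpct:all.ind} holds because induction preserves local triviality over the same cover.
\item \ref{item:th:ft.iff.ext.cpct:all.ind} $\Rightarrow$ \ref{item:th:ft.iff.ext.cpct:all.rep}: apply the previous step to $\bG\to U(V)$ or $O(V)$, with $\bU$ acting by conjugation, and then pass to the associated vector bundle.
\item \ref{item:th:ft.iff.ext.cpct:all.rep} $\Rightarrow$ \ref{item:th:ft.iff.ext.cpct:some.rep} needs a faithful $\bU$-equivariant representation to exist. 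I would obtain one by inducing or co-inducing a faithful $\bG$-representation up to $\bG\rtimes\bU$ and restricting back.
\item \ref{item:th:ft.iff.ext.cpct:some.rep} $\Rightarrow$ \ref{item:th:ft.iff.ext.cpct:some.ind}: the frame bundle of $\cP\times_\bG V$ is $\cP\times_\bG U(V)$, which is finite-type iff $\cP\times_\bG V$ is.
\end{itemize}

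\emph{The main obstacle} is \ref{item:th:ft.iff.ext.cpct:some.ind} $\Rightarrow$ \ref{item:th:ft.iff.ext.cpct:ft}. I would route it through \ref{item:th:ft.iff.ext.cpct:cpct} as follows.
\begin{itemize}
\item By the cycle just established, applied to the structure group $\bK$, the bundle $\cP\times_\bG\bK$ extends to a principal $\bK$-bundle$_\bU$ $\cQ\to\beta_\bU X$.
\item Now $\cP$ is a reduction of $\cP\times_\bG \bK$ to $\bG$. Such reductions correspond to $\bU$-equivariant sections of the associated bundle with fiber $\bK/\bG$, a compact $\bU$-manifold.
\item The needed statement is that a section over $X$ of $\cQ\times_\bK(\bK/\bG)|_X$ extends to $\beta_\bU X$. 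Locally over a finite invariant cover this is a $\bU$-map from an invariant open piece to the compact space $\bK/\bG$. I would extend it via the universal property of $\beta_\bU$ applied to compact targets, gluing with the finite cover and normality.
\item The cleanest version: the total space $E=\cQ\times_\bK(\bK/\bG)$ is a compact Hausdorff $\bU$-space over $\beta_\bU X$. The section is a $\bU$-map $X\to E$, so it extends to $\beta_\bU X\to E$. Its composite with the projection agrees with the canonical map on the dense subset $X$, hence everywhere, so the extension is again a section.
\item That extended section defines a $\bG$-reduction of $\cQ$ over $\beta_\bU X$ restricting to $\cP$, which gives \ref{item:th:ft.iff.ext.cpct:cpct}.
\end{itemize}
The delicate points are density of $X$ in $\beta_\bU X$ and the fact that $\beta_\bU X$ is Hausdorff and contains $X$ as a subspace. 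Both require $X$ to be a Tychonoff $\bU$-space, which holds for compact Lie $\bU$, so I would invoke that.
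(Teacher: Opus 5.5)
Your proposal is correct. Outside of (h)$\Rightarrow$(a) it follows the paper's proof: the same compactification cycle (Milnor-join classifying map, factorization through $\beta_{\bU}X$, finite type being pullback-stable and automatic over compact bases) and the same frame-bundle passages among (e)--(h).

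The real difference is in (h)$\Rightarrow$(a). The paper takes a finite trivializing cover $(U_i)$ of $\beta_{\bU}X$ for the extension $\mathcal{Q}$ of $\cP\times_{\bG}\bK$. It records the reduction over $X$ through gauge maps $\lambda_i$ on $X\cap U_i$, with $\lambda_i^{-1}g_{ij}\lambda_j$ landing in $\bG$. It then packages these into one $\bU$-map from $X$ into a join, extends it across $\beta_{\bU}X$, and lets continuity carry the condition over to all of $U_i\cap U_j$. This is an equivariant version of the cocycle description of structure-group reduction.

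You instead encode the reduction as a single $\bU$-equivariant section of $\mathcal{Q}/\bG=\mathcal{Q}\times_{\bK}(\bK/\bG)$. This is a compact Hausdorff $\bU$-space because $\mathcal{Q}$ is compact. You extend the section by the universal property and recognize the extension as a section by uniqueness. The underlying mechanism is the same: reduction data become a $\bU$-map from $X$ into something compact.

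Your version is coordinate-free. It needs no transition functions and no care about the twisted local models or the equivariance of gauge maps. It also avoids explaining how partially defined $\lambda_i$ are glued into a global map and recovered on the $U_i$ after extension. All it uses is that $\mathcal{Q}\to\mathcal{Q}/\bG$ is a principal $\bG$-bundle. Then the preimage of the extended section is a $\bU$-invariant reduction over $\beta_{\bU}X$, equivariantly locally trivial by the same tom Dieck results you invoke for (d)$\Rightarrow$(a). The paper's route, in exchange, stays within the explicit cocycle formalism.

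Only the uniqueness clause of the universal property is needed to see that the extension composed with the projection $\mathcal{Q}/\bG\to\beta_{\bU}X$ is the identity. So the density and embedding issues you flag are not really delicate. The cover-by-cover extension you sketch before the global version can simply be dropped.

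There are three minor slips elsewhere.
\begin{itemize}
\item In (a)$\Rightarrow$(b), the join (of copies of $\bG$, or rather of the compact local objects) is the target of the total space $\cP$. The base that receives $X$ and carries the classifying bundle is its $\bG$-quotient.
\item Also in (a)$\Rightarrow$(b), joins of compact manifolds are generally not manifolds; already $\bG*\bG$ for finite $\bG$ of order at least $3$ is a graph with branch points. So the finite $\bU$-CW structure does not come from triangulating smooth $\bU$-manifolds and needs a separate argument, e.g.\ via Illman's triangulation of the products of orbits out of which the join is glued. The paper leaves this implicit as well.
\item In (f)$\Rightarrow$(g), inducing from $\bG$ to $\bG\rtimes\bU$ gives an infinite-dimensional representation when $\bU$ is infinite. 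It is simpler to restrict to $\bG$ any faithful finite-dimensional representation of the compact Lie group $\bG\rtimes\bU$.
\end{itemize}
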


This does all specialize back to the relevant portion of \cite[Proposition 2.9]{zbMATH05172034} by identifying vector/matrix bundles with principal bundles over unitary and projective unitary groups respectively (\Cref{res:mtrz.as.vect}\Cref{item:res:mtrz.as.vect:rec.upu}). There are other aspects to that result's finite-type characterization, quite apart from what the preceding theorem covers though; it turns out that the finite-type matrix bundles $\cA\xrightarrowdbl{}X$ are also precisely those for which a ``tensor neutralizer'' exists: $\cA\otimes \cB$ is trivial for some matrix bundle $\cB\xrightarrowdbl{}X$.

This prompts a foray into the issue of whether and to what extent matrix/vector bundles$_{\bU}$ can be trivialized in this fashion. Triviality admitting multiple interpretations in the context of equivariant bundles, we refer to its strongest incarnation as \emph{neutrality} (\Cref{def:ntrl}): the diagonal $\bU$-action on $X\times V$ for a representation $V$ (perhaps carrying a $\bU$-equivariant $C^*$-algebra structure, inner product, etc.). A sample result:

\begin{theorem}\label{th:which.cpct.lie.gps.loc}
  The following conditions on a compact Lie group $\bU$ are equivalent.

  \begin{enumerate}[(a),wide]
  \item\label{item:th:which.cpct.lie.gps.loc:alg}  For every normal $\bU$-space $X$, a locally trivial subhomogeneous $C^*$ bundle$_{\bU}$ $\cA\xrightarrowdbl{}X$
    \begin{enumerate}[(i),wide]
    \item\label{item:th:which.cpct.lie.gps.loc:alg:ft} is finite-type
    \item\label{item:th:which.cpct.lie.gps.loc:alg:emb} if and only if it embeds unitally into a neutral matrix (or subhomogeneous $C^*$) bundle$_{\bU}$ locally over a finite open cover by $\bU$-invariant sets;

    \item\label{item:th:which.cpct.lie.gps.loc:alg:tens} and when it is a matrix bundle, also if and only if there is a matrix bundle$_{\bU}$ $\cB\xrightarrowdbl{}X$ with $\cA\otimes \cB$ neutral, over a finite open cover by $\bU$-invariant sets.
    \end{enumerate}
  \item\label{item:th:which.cpct.lie.gps.loc:vec} For every compact-$T_2$-base locally trivial vector bundle$_{\bU}$ $\cE\xrightarrowdbl{}X$ there are, locally on $X$, non-zero bundles$_{\bU}$ $\cF$ with $\cE\otimes \cF$ neutral.  \qedhere
  \end{enumerate}
\end{theorem}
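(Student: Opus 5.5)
The plan is to prove that \Cref{item:th:which.cpct.lie.gps.loc:alg} and \Cref{item:th:which.cpct.lie.gps.loc:vec} are each equivalent to a third, purely group-theoretic condition: \emph{the identity component $\bU_0$ is a torus}. Equivalently, every closed subgroup of $\bU$, and in particular every isotropy group, is virtually abelian. Everything is localized around orbits via the slice theorem. Near an orbit $\bU x$ with isotropy $\bH:=\bU_x$, a locally trivial bundle$_{\bU}$ is, over an invariant tube, isomorphic to the pullback of $\bU\times_{\bH}W$ for an $\bH$-representation $W$ (or an $\bH$-equivariant matrix or subhomogeneous algebra). Under the same identification, neutral bundles correspond to $W=V|_{\bH}$ for $\bU$-representations $V$. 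Normality of $X$ plus a shrinking argument then converts ``locally'' into ``over a finite invariant open cover''. For this step I would rely on \Cref{th:ft.iff.ext.cpct}, especially (a)$\Leftrightarrow$(g) and (a)$\Leftrightarrow$(h), together with the identification of matrix bundles with $PU_n$-principal bundles. Finite type of the algebra bundle is then equivalent to finite type of an associated vector, or projective unitary, principal bundle$_{\bU}$.

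For ``$\bU_0$ a torus $\Rightarrow$ \Cref{item:th:which.cpct.lie.gps.loc:vec}'', fix an isotropy group $\bH$, virtually abelian, and an $\bH$-representation $W$. I want a non-zero $\bH$-representation $W'$ with $W\otimes W'\cong V|_{\bH}$ for some $\bU$-representation $V$; then $\cF:=\bU\times_{\bH}W'$ works on the tube. First pick a finite-dimensional $\bU$-representation $V_0$ whose restriction to $\bH$ contains every irreducible constituent of $W$, which Peter--Weyl and Frobenius reciprocity provide. Then take $W'\supseteq W^*\otimes(\text{suitable multiplicity})$ and use that for virtually abelian $\bH$ the tensor products $W\otimes W^*\otimes V_0|_{\bH}$ can be balanced: the finite-index abelian $\bA\le\bH$ acts by characters, and the finite quotient $\bH/\bA$ is handled by the ``multiple of the regular representation'' trick of Donovan--Karoubi type. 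The goal is to arrange $W\otimes W'$ to be exactly a restricted $\bU$-representation, not merely to contain one. For \Cref{item:th:which.cpct.lie.gps.loc:alg} in this direction I would argue as follows.
\begin{itemize}
\item For \Cref{item:th:which.cpct.lie.gps.loc:alg:tens}, take $\cB=\mathrm{End}(\cF)$ glued over a finite cover, once finite type gives a finite cover. Conversely, neutrality of $\cA\otimes\cB$ over a finite cover yields finite type via \Cref{th:ft.iff.ext.cpct}(h), applied to the embedding $PU_n\to PU_{nm}$ given by $a\mapsto a\otimes 1$.
\item For \Cref{item:th:which.cpct.lie.gps.loc:alg:emb}, I would use the same construction: $\cA\hookrightarrow \cA\otimes\cB$ unitally.
\end{itemize}

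For the converse directions, suppose $\bU_0$ is non-abelian. Then it contains a closed subgroup $\bH\cong SU(2)$ or $SO(3)$. I would test on the compact $\bU$-space $X=\bU/\bH$, or on its cone, to get a non-trivial normal base. A bundle$_{\bU}$ there is $\bU\times_{\bH}W$. Neutrality of $\cE\otimes\cF$ demands $W\otimes W'\cong V|_{\bH}$. The aim is to pick $W$ so that no such $W'$ exists, and likewise for the matrix version, where finite type holds automatically on a compact base but the tensor and embedding characterizations fail. The obstruction I would try first uses characters of the maximal torus of $\bH$ compared with weights of $\bU$-representations restricted to $\bH$. Restrictions from $\bU$ satisfy a constraint, for instance on the central character or on parity of weights modulo the kernel of $\bH\to\bU$, that $W\otimes W'$ cannot satisfy for a suitably chosen $W$. \textbf{This is the main obstacle}: it is not clear that a plain $\bH\le\bU$ always yields an obstruction, since for $\bH=\bU$ every $W$ is already restricted. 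So the choice of $\bH$ must exploit properness, possibly via the normalizer of $\bH$ or a non-connected isotropy group containing $\bH$. If this fails, I would revisit the precise group-theoretic condition to match the abstract's ``virtually abelian isotropy'' hypothesis.
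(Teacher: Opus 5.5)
\textbf{The gap: the intermediate criterion is false.} Your plan routes both (a) and (b) through the claim that they are equivalent to ``$\bU_0$ is a torus''. That claim is false, so the converse half of your argument cannot be carried out. The obstacle you flag is not technical.

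Take $\bU=SU(2)$ (or $SO(3)$). Every proper closed subgroup is finite, a circle, or the normalizer of a circle, hence virtually abelian. So $\bU$ acts neutrally on every $\bU/\bH$ with $\bH\ne\bU$ by \Cref{cor:vrt.ab}(1), and $\bU/\bU$ is a point. By \Cref{pr:loc.ntrl}, $\bU$ is therefore locally neutral. Since a locally trivial bundle$_{\bU}$ is pulled back, near each orbit $\bU x$, from a local object over $\bU/\bU_x$, condition (b) holds, and with it (a).

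Here is a concrete check on $SU(2)/\bT\cong\bS^2$. For $W=\bigoplus_i z^{k_i}$, the weights of $W\otimes W^*$ are symmetric under $k\mapsto -k$. Tensoring with $V_N|_{\bT}$ (with $V_N$ the $(N+1)$-dimensional irreducible and $N\ge$ the largest weight) makes the weight multiplicities symmetric and non-increasing away from $0$ within each parity class. Such multiplicities characterize restrictions from $SU(2)$.

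So for $SU(2)$ there is nothing to obstruct. The only closed subgroup that is not virtually abelian is $\bU$ itself, and its orbit is a point; this is exactly the degenerate case you noticed. Your forward half (``$\bU_0$ a torus $\Rightarrow$ (b)'') is true by \Cref{cor:vrt.ab}(2). However, your sketch of it omits the actual inputs (\Cref{pr:fin.tor.loc.ntrl}, \Cref{th:fin.ind.dscnt}), and in any case it proves only a special case, not the equivalence.

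\textbf{What the statement actually needs.} No classification is required: the result is the local version of \Cref{th:which.cpct.lie.gps} and is proved formally.
\begin{itemize}
\item (a)$\Rightarrow$(b): a compact-base $\cE$ is finite-type. Apply (a)(iii) to the matrix bundle $\cE\otimes\cE^*$ to get, locally, $\cB$ with $\cE\otimes\cE^*\otimes\cB$ neutral. Then $\cF:=\cE^*\otimes\cB$ neutralizes $\cE$.
\item (b)$\Rightarrow$(a): finite type passes to unital subalgebra bundles, so only (i)$\Rightarrow$(ii) has content. Reduce to a compact base via \Cref{th:ft.iff.ext.cpct}. Regard $\cA$ as a vector bundle$_{\bU}$ and choose, locally, $\cF$ with $\cG:=\cA\otimes\cF$ neutral. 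Left multiplication of $\cA$ on $\cG$ embeds $\cA$ unitally into the neutral bundle $\mathrm{End}(\cG)$. For (iii), take $\cB$ to be the commutant of $\cA$ in $\mathrm{End}(\cG)$.
\end{itemize}

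This also repairs two slips in your algebra-level steps. First, $\cA\otimes\mathrm{End}(\cF)$ need not be neutral. Second, $\cA\otimes\cB$ is induced from the fibered product of the frame bundles of $\cA$ and $\cB$ along $PU_n\times PU_m\to PU_{nm}$. It is not induced from the frame bundle of $\cA$ alone along $a\mapsto a\otimes 1$, so \Cref{th:ft.iff.ext.cpct}(h) does not apply in the form you state.
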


\emph{Local} neutrality is the guessable variation: the above conditions are required to hold only locally, and actions $\bU\circlearrowright X$ are deemed locally neutral if all vector bundles$_{\bU}$ $\cE\xrightarrowdbl{}X$ are so. This is an entirely orbit-driven property, and consequently representation-theoretic in nature:

\begin{proposition}\label{pr:loc.ntrl}
  The following conditions on a compact Lie group $\bU$ are equivalent.

  \begin{enumerate}[(a),wide]
  \item\label{item:pr:loc.ntrl:loc.ntrl} $\bU$ is locally neutral.

  \item\label{item:pr:loc.ntrl:mfld.ntrl} All smooth $\bU$-actions on compact $C^{\infty}$ manifolds are neutral.
    
  \item\label{item:pr:loc.ntrl:orb.ntrl} $\bU$ acts neutrally on all of its homogeneous spaces $\bU/\bH$ by closed subgroups.

  \item\label{item:pr:loc.ntrl:rep.ext} For every closed $\bH\le \bU$ and every representation $\bH\circlearrowright V$ there is another, $\bH\circlearrowright W$, with $V\otimes W$ restricted from $\bU$. 
  \end{enumerate}
\end{proposition}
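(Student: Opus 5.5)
The plan is to run the cycle \Cref{item:pr:loc.ntrl:loc.ntrl} $\Rightarrow$ \Cref{item:pr:loc.ntrl:mfld.ntrl} $\Rightarrow$ \Cref{item:pr:loc.ntrl:orb.ntrl} $\Rightarrow$ \Cref{item:pr:loc.ntrl:rep.ext} $\Rightarrow$ \Cref{item:pr:loc.ntrl:loc.ntrl}. Throughout, ``neutral'' for an action $\bU\circlearrowright X$ means that every vector bundle$_{\bU}$ $\cE\to X$ admits a non-zero bundle$_{\bU}$ $\cF$ with $\cE\otimes\cF$ neutral. ``Locally neutral'' means this holds locally, over invariant neighborhoods of orbits.

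\textbf{The easy implications.} For \Cref{item:pr:loc.ntrl:loc.ntrl} $\Rightarrow$ \Cref{item:pr:loc.ntrl:mfld.ntrl}, first cover the compact manifold by finitely many invariant open sets on which neutralizers $\cF_i$ exist. Next, patch them globally by tensoring: a neutralizer over $U_i$ stays a neutralizer after tensoring with any neutral bundle. On a compact manifold one can also use equivariant Mostow--Palais embeddings and complements of bundles$_{\bU}$ in neutral bundles (Segal) to move pieces around.

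A cleaner route, which avoids the patching, replaces \Cref{item:pr:loc.ntrl:mfld.ntrl} by a statement about finitely many orbit types and argues from \Cref{item:pr:loc.ntrl:rep.ext} directly. So I would prove \Cref{item:pr:loc.ntrl:mfld.ntrl} $\Rightarrow$ \Cref{item:pr:loc.ntrl:orb.ntrl}, which is immediate since $\bU/\bH$ is a compact smooth $\bU$-manifold. Then I would prove \Cref{item:pr:loc.ntrl:rep.ext} $\Rightarrow$ \Cref{item:pr:loc.ntrl:mfld.ntrl} separately, and also \Cref{item:pr:loc.ntrl:rep.ext} $\Rightarrow$ \Cref{item:pr:loc.ntrl:loc.ntrl}.

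\textbf{The orbit–representation dictionary.} For \Cref{item:pr:loc.ntrl:orb.ntrl} $\Rightarrow$ \Cref{item:pr:loc.ntrl:rep.ext}, use the standard equivalence: bundles$_{\bU}$ on $\bU/\bH$ correspond to $\bH$-representations via $V\mapsto \bU\times_{\bH}V$. Under this equivalence tensor products go to tensor products, and the neutral bundle $\bU/\bH\times W$ corresponds to $\mathrm{Res}^{\bU}_{\bH}W$. Given $V$, neutrality of $\bU/\bH$ provides $\cF=\bU\times_{\bH}W'$ with $(\bU\times_{\bH}V)\otimes\cF$ neutral, that is, $V\otimes W'\cong \mathrm{Res}\, W$.

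\textbf{The main step, \Cref{item:pr:loc.ntrl:rep.ext} $\Rightarrow$ \Cref{item:pr:loc.ntrl:loc.ntrl}.} Fix $x\in X$ with isotropy $\bH=\bU_x$.
\begin{enumerate}[(1),wide]
\item By the slice theorem for compact Lie actions on (completely regular) spaces, $x$ has an invariant neighborhood $\bU\times_{\bH}S$.
\item After shrinking, $\cE$ restricted there is isomorphic to $\bU\times_{\bH}(S\times V)$, where $V=\cE_x$ is an $\bH$-representation. This is equivariant local triviality near orbits, via the homotopy invariance of bundles$_{\bH}$ over a contractible (conical) slice, or via a tube argument.
\item Choose $W$ with $V\otimes W\cong\mathrm{Res}^{\bU}_{\bH}R$, and set $\cF=\bU\times_{\bH}(S\times W)$.
\item Then $\cE\otimes\cF\cong\bU\times_{\bH}(S\times \mathrm{Res}\,R)\cong (\bU\times_{\bH}S)\times R$. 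The last isomorphism is the untwisting map $[u,s,r]\mapsto([u,s],ur)$.
\end{enumerate}
The proof of \Cref{item:pr:loc.ntrl:rep.ext} $\Rightarrow$ \Cref{item:pr:loc.ntrl:mfld.ntrl} then goes by induction over the finitely many orbit types of the compact manifold. Tensor products of local neutralizers remain neutralizers on overlaps. To glue them, extend each local neutralizer to a global bundle$_{\bU}$, using Segal's result that bundles$_{\bU}$ over compact bases are summands of neutral ones.

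\textbf{Expected obstacle.} I expect the main obstacle to be the local linearization in step (2): that $\cE$ is equivariantly trivial over a tube, in the form $\bU\times_{\bH}(S\times V)$. For non-manifold normal bases this needs equivariant local triviality in tom Dieck's sense, perhaps even as a hypothesis. I would first try to deduce it from the slice theorem combined with local triviality of $\cE|_S$ as a bundle$_{\bH}$ near the fixed point $x$.
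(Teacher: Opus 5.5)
\textbf{Gap.} You read (b) as \emph{global} neutrality of smooth actions on compact manifolds. Both routes you give to it rest on patching local neutralizers, and patching does not work.

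Suppose $\cF_1$ neutralizes $\cE$ over $U_1$, and $\wt{\cF}_2$ is a global extension (via Segal's summand result) of a neutralizer over $U_2$. Then over $U_1$ the bundle $\cE\otimes\cF_1\otimes\wt{\cF}_2$ is a neutral bundle tensored with $\wt{\cF}_2|_{U_1}$, and that has no reason to be neutral. Even if each $\cE\otimes\cF|_{U_i}$ were isomorphic to some $U_i\times R_i$, those isomorphisms need not glue.

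Neutrality is simply not a local property. For trivial $\bU$, every bundle is locally neutral. Global neutrality there is Atiyah's multiplicative-complement lemma, which needs the $K$-theoretic nilpotence plus homotopy-stability argument of \Cref{rec:k.ntrl}. The paper stresses (\Cref{re:suff.lrg.bdl}) that no naive equivariant stability is available. Equivariantly it obtains only $K$-neutrality, and only for finite isotropy (\Cref{th:fin.istrp.loc.k}). So your (a)$\Rightarrow$(b), and your (d)$\Rightarrow$(b) ``by induction over orbit types'', are not proofs, and the cycle does not close as written.

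\textbf{How the paper handles (b), and the repair.} The paper uses (b) in the \emph{local} sense. That is the only reading under which its ``(a)$\Rightarrow$(b) is formal'' makes sense, and likewise its proof of (c)$\Rightarrow$(b): tubes in smooth manifolds retract equivariantly onto orbits, so bundles over them come from $\bU/\bH$. The rest of the paper's proof runs as follows.
\begin{itemize}
\item (c)$\Leftrightarrow$(d) is Segal's $\bU/\bH$ dictionary, exactly as you use it.
\item (b)$\Rightarrow$(a) pulls finite-type bundles back along classifying maps to Grassmannians of $\bU$-representations, after passing to compact bases via \Cref{th:ft.iff.ext.cpct}.
\end{itemize}

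With (b) read locally, your proof is repaired by deleting the patching:
\begin{itemize}
\item Your tube argument for (d)$\Rightarrow$(a) is correct. The local form $\cE|_{\bU\times_{\bH}S}\cong\bU\times_{\bH}(S\times V)$ does hold for compact Lie $\bU$, e.g.\ by averaging a local trivialization near the $\bH$-fixed point $x$ over $\bH$ and shrinking $S$.
\item For non-compact normal bases, first pass to $\beta_{\bU}X$.
\item This argument yields (b) as a special case.
\item To close the loop you still need (a)$\Rightarrow$(b) or (a)$\Rightarrow$(c). Both are formal in the local reading; for the latter, the only non-empty invariant open subset of $\bU/\bH$ is $\bU/\bH$ itself.
\end{itemize}

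So repaired, your route genuinely differs from the paper's. You work directly on arbitrary compact $T_2$ bases via slices and equivariant local triviality. The paper instead reduces to smooth manifolds through Grassmannians, where tube retractions do the work. In both arguments the finite invariant cover depends on the bundle.
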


Writing $\bU\overset{\circlearrowright}{/}\bH$ as shorthand for \Cref{pr:loc.ntrl}\Cref{item:pr:loc.ntrl:rep.ext}'s condition on $\bH\le \bU$, there are some of the (perhaps expected) permanence results:

\begin{theorem}\label{th:fin.ind.dscnt}
  \begin{enumerate}[(1),wide]
  \item\label{item:th:fin.ind.dscnt:fin.ind.l} If $\bK\le \bH\le \bU$ is a chain of compact Lie groups with finite-index $\bK\le \bH$ then
    \begin{equation*}
      \bU \overset{\circlearrowright}{/} \bK      
      \iff
      \bU \overset{\circlearrowright}{/} \bH.
    \end{equation*}

  \item\label{item:th:fin.ind.dscnt:fin.ind.r} If $\bK,\bH\le \bU$ are closed subgroups of a compact Lie group with finite-index $\bH\le \bU$ then
    \begin{equation*}
      \bU \overset{\circlearrowright}{/} \bK
      \iff
      \bU \overset{\circlearrowright}{/} \left(\bK\cap \bH\right)
      \iff
      \bH \overset{\circlearrowright}{/} \left(\bK\cap \bH\right).
    \end{equation*}

  \item\label{item:th:fin.ind.dscnt:fin.ind.r.glob} In particular, a finite-index $\bH\le \bU$ is locally $\cX$-neutral for a family of $\bU$-spaces/actions if and only if $\bU$ is. 
  \end{enumerate}
\end{theorem}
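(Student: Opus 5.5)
Throughout, $\bU\overset{\circlearrowright}{/}\bH$ means: for every representation $\bH\circlearrowright V$ there is $\bH\circlearrowright W\neq 0$ with $V\otimes W$ restricted from $\bU$. (With $W=0$ allowed the condition would be vacuous.) The two tools are restriction and induction along finite-index inclusions, together with the fact that for finite index, $\operatorname{Ind}$ preserves nonzero-ness and $\operatorname{Res}\operatorname{Ind}$ contains the original representation as a summand. For a tensor factor I will always take a genuine representation, never a virtual one.

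\textbf{Part \Cref{item:th:fin.ind.dscnt:fin.ind.l}.} First assume $\bU\overset{\circlearrowright}{/}\bK$, and let $V\in\Rep(\bH)$.
\begin{itemize}
\item Restrict $V$ to $\bK$ and choose $W_0\neq 0$ with $V|_{\bK}\otimes W_0\cong R|_{\bK}$ for some $R\in\Rep(\bU)$.
\item Induce up to $\bH$. By the projection formula,
\begin{equation*}
  \operatorname{Ind}_{\bK}^{\bH}\left(V|_{\bK}\otimes W_0\right)\cong V\otimes \operatorname{Ind}_{\bK}^{\bH}W_0
  \quad\text{and}\quad
  \operatorname{Ind}_{\bK}^{\bH}\left(R|_{\bK}\right)\cong R|_{\bH}\otimes \operatorname{Ind}_{\bK}^{\bH}\mathbf 1 .
\end{equation*}
The right-hand side is not yet restricted from $\bU$, because of the factor $P:=\operatorname{Ind}_{\bK}^{\bH}\mathbf 1$, the permutation representation on $\bH/\bK$.
\end{itemize}
To remove $P$, note that $P$ is an $\bH$-representation factoring through the finite quotient by the normal core $\bK'$ of $\bK$ in $\bH$; $\bK'$ still has finite index in $\bH$. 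I will handle $P$ by one of two routes.
\begin{itemize}
\item Apply the hypothesis directly to $P$ to get a factor $W'$, but I need to check this is legitimate.
\item Better: reduce to the case where $\bK\trianglelefteq\bH$ is normal, and then use the finite-group fact that a permutation-type module over a finite group $F$ tensored with the regular representation $\mathbb C F$ is a multiple of $\mathbb C F$. In our setting, $P\otimes \operatorname{Ind}_{\bK'}^{\bH}\mathbf 1$ is a multiple of $\operatorname{Ind}_{\bK'}^{\bH}\mathbf 1$.
\end{itemize}
The cleanest approach is therefore to work throughout with $\bK'$. The hypothesis descends from $\bK$ to $\bK'$ and ascends back, so it suffices to treat normal finite-index inclusions. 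In that case, set $W:=\operatorname{Ind}_{\bK}^{\bH}W_0$. Then
\begin{equation*}
  V\otimes W\cong R|_{\bH}\otimes \mathbb C[\bH/\bK],
\end{equation*}
and the remaining issue is whether $\mathbb C[\bH/\bK]$ is itself restricted from $\bU$ up to a further tensor factor. That is exactly an instance of $\bU\overset{\circlearrowright}{/}\bH$ applied to a representation pulled back from a finite group. I expect this to be the \textbf{main obstacle}. My first attempt will be to apply the hypothesis to $\mathbb C[\bH/\bK]|_{\bK}$, which is trivial and hence harmless, and then run a bootstrapping argument through the induced representation $\operatorname{Ind}_{\bH}^{\bU}$.

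\textbf{Converse of \Cref{item:th:fin.ind.dscnt:fin.ind.l}.} Assume $\bU\overset{\circlearrowright}{/}\bH$ and let $V\in\Rep(\bK)$. Induce to get $\operatorname{Ind}_{\bK}^{\bH}V$, which contains $V$ upon restriction to $\bK$, and choose $W\neq 0$ with $\operatorname{Ind}_{\bK}^{\bH}V\otimes W\cong R|_{\bH}$. Restricting to $\bK$ gives
\begin{equation*}
  R|_{\bK}\cong\left(V\oplus V'\right)\otimes W|_{\bK}.
\end{equation*}
This exhibits $V$ as a tensor factor only of a summand of $R|_{\bK}$, so I still need to absorb the complement. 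For that I will use the equivalence in \Cref{pr:loc.ntrl} between the representation-theoretic condition and neutrality of $\bU\circlearrowright\bU/\bK$. The covering $\bU/\bK\to\bU/\bH$ is finite, and a bundle on $\bU/\bK$ pushes forward to $\bU/\bH$, where it is neutralised by hypothesis. Pulling the neutralising bundle back should give the neutraliser upstairs.

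The same geometric argument, pulling back along the finite covering $\bU/\bK\to\bU/\bH$ and pushing forward along it, is the route I would take for both directions of \Cref{item:th:fin.ind.dscnt:fin.ind.l}. It is probably more robust than the algebra above.

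\textbf{Part \Cref{item:th:fin.ind.dscnt:fin.ind.r}.} For $\bH\le\bU$ of finite index, $\bU/(\bK\cap\bH)\to\bU/\bK$ is a finite covering, so the first equivalence is \Cref{item:th:fin.ind.dscnt:fin.ind.l} applied to the chain $\bK\cap\bH\le\bK\le\bU$. For the second equivalence:
\begin{itemize}
\item $\bU\overset{\circlearrowright}{/}(\bK\cap\bH)$ implies $\bH\overset{\circlearrowright}{/}(\bK\cap\bH)$ trivially, since a representation restricted from $\bU$ is restricted from $\bH$.
\item Conversely, $\bH\overset{\circlearrowright}{/}(\bK\cap\bH)$ combined with $\bU\overset{\circlearrowright}{/}\bH$ gives the claim. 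The latter holds by \Cref{item:th:fin.ind.dscnt:fin.ind.l} applied to $\{1\}\le\bH$ versus $\{1\}\le\bU$, or directly: the trivial group satisfies the condition, and finite index gives $\bU\overset{\circlearrowright}{/}\bH$. This uses a transitivity of $\overset{\circlearrowright}{/}$ along chains, which I will prove by composing neutralisers.
\end{itemize}

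\textbf{Part \Cref{item:th:fin.ind.dscnt:fin.ind.r.glob}.} This follows by ranging over all closed $\bK$, since isotropy groups of $\bH$-spaces have the form $\bK\cap\bH$. I also use that $\bU\times_{\bH}Y$ relates $\bH$-spaces to $\bU$-spaces.
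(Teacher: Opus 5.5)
\textbf{There is a genuine gap}, and it sits where you flag the ``main obstacle'': the forward implication $\bU\overset{\circlearrowright}{/}\bK\Rightarrow\bU\overset{\circlearrowright}{/}\bH$ in (1).

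Your induction step is fine and gives a clean identity,
\begin{equation*}
  V\otimes \mathrm{Ind}_{\bK}^{\bH}W_0\cong R|_{\bH}\otimes \mathbb C[\bH/\bK].
\end{equation*}
The proposed bootstrap cannot remove $\mathbb C[\bH/\bK]$, because applying the hypothesis to the trivial $\bK$-representation $\mathbb C[\bH/\bK]|_{\bK}$ returns nothing. In fact no argument drawing only on the hypothesis can work. Take $\bK=\{1\}$ and $\bH$ finite: the hypothesis $\bU\overset{\circlearrowright}{/}\{1\}$ holds trivially. The conclusion $\bU\overset{\circlearrowright}{/}\bH$, tested on the regular representation of $\bH$, is equivalent to the existence of a $\bU$-representation restricting to a multiple of $\bH$'s regular representation. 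That is a genuine theorem, namely the finite case of \Cref{pr:fin.tor.loc.ntrl}\Cref{item:pr:fin.tor.loc.ntrl:subtor.fin}.

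The missing ingredient is the one the paper imports from outside (the proof of Proposition 11 in \cite{MR893156}): a nonzero $\eta\in\Rep(\bU)$ whose character vanishes on $\bH\setminus\bK$, for $\bK\trianglelefteq\bH$ of finite index. With it your identity closes at once. For normal $\bK$ one has $\chi_{\mathbb C[\bH/\bK]}=[\bH:\bK]\cdot\mathbf 1_{\bK}$, so $\mathbb C[\bH/\bK]\otimes\eta|_{\bH}\cong(\eta|_{\bH})^{\oplus[\bH:\bK]}$, and $W:=\mathrm{Ind}_{\bK}^{\bH}W_0\otimes\eta|_{\bH}$ works. Choosing $\eta$ to vanish off the normal core of $\bK$ even spares you the reduction to normal $\bK$.

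Completed this way, your route is a bit more direct than the paper's. The paper first tensors the $\bH/\bK$-conjugates $\Ad_g\psi$ together and passes through commensurability before invoking the same $\eta$. Your geometric reformulation does not help, though: push-forward along $\pi:\bU/\bK\to\bU/\bH$ produces the same factor $\pi_*\mathbf 1$, which is the bundle attached to $\mathbb C[\bH/\bK]$.

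\textbf{The converse direction of (1) also does not close as written.} From $(V\oplus V')\otimes W|_{\bK}\cong R|_{\bK}$ you only neutralize a representation containing $V$ as a summand. Pulling back along the covering reproduces exactly this, since $\pi^*\pi_*\cE\cong\cE\oplus\cE'$, so it does not isolate $V$.

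What is needed is $\bH\overset{\circlearrowright}{/}\bK$ for every finite-index $\bK\le\bH$ (\Cref{pr:loc.ntrl.fin}\Cref{item:pr:loc.ntrl.fin:fin.ind}, a Clifford--Mackey argument). Transitivity (\Cref{pr:loc.ntrl.fin}\Cref{item:pr:loc.ntrl.fin:trnstv}) then gives $\bU\overset{\circlearrowright}{/}\bH\Rightarrow\bU\overset{\circlearrowright}{/}\bK$; this is the paper's proof of that direction.

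The same fact is what two other steps of yours actually require:
\begin{itemize}
\item your normality reduction, which descends from $\bK$ to its normal core;
\item your claim $\bU\overset{\circlearrowright}{/}\bH$ in part (2). ``(1) applied to $\{1\}\le\bH$ versus $\{1\}\le\bU$'' is not an instance of (1), since $\{1\}$ need not have finite index in $\bH$.
\end{itemize}
With these two inputs supplied, your treatment of (2) and (3) coincides with the paper's.
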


To conclude and come full circle back to flavors of finite-type characterizations, recall that \emph{virtually abelian} groups are those having finite-index abelian subgroups (and generally, \emph{virtually $\cP$} means having a finite-index subgroup with any given property $\cP$). As a consequence of the local-neutrality material in \Cref{se:ntrl} we have the following equivariant amplification of \cite[Theorem A]{2605.10752v1}. 

\begin{corollary}\label{cor:virt.ab.istrp}
  For a compact-Lie action $\bU\circlearrowright X$ with virtually abelian isotropy on a normal space, a locally trivial $C^*$ bundle$_{\bU}$ is finite-type if and only if it embeds unitally into neutral matrix bundles$_{\bU}$ locally over a finite open cover by $\bU$-invariant subsets.
\end{corollary}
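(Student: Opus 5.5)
The corollary should follow by feeding the isotropy hypothesis into the equivalence of \Cref{th:which.cpct.lie.gps.loc}\Cref{item:th:which.cpct.lie.gps.loc:alg}. That equivalence is stated for all normal $\bU$-spaces, given that $\bU$ satisfies the conditions there. Here I would prove a relative version: the implications survive when local neutrality is required only of the orbits actually occurring in $X$, that is, of the homogeneous spaces $\bU/\bU_x$ with $\bU_x$ virtually abelian. The plan has three steps.

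\textbf{Step 1: virtually abelian isotropy gives $\bU\overset{\circlearrowright}{/}\bU_x$.} By \Cref{th:fin.ind.dscnt}\Cref{item:th:fin.ind.dscnt:fin.ind.l}, I may replace $\bU_x$ by a finite-index abelian subgroup $\bK\le \bU_x$. So it suffices to show $\bU\overset{\circlearrowright}{/}\bK$ for closed abelian $\bK\le\bU$, in the representation-theoretic form of \Cref{pr:loc.ntrl}\Cref{item:pr:loc.ntrl:rep.ext}: for every $\bK$-representation $V$ there is $W$ with $V\otimes W$ restricted from $\bU$.
\begin{itemize}
\item Since $\bK$ is abelian, $V$ is a sum of characters $\chi_i$.
\item Choose a faithful $\bU$-representation $R$. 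Its restriction to $\bK$ is a faithful sum of characters, and characters of compact abelian $\bK$ extend along $\bK\le\bU$ only after tensoring.
\item Better: take $W=\bigoplus_i \chi_i^{-1}\otimes \rho$, where $\rho$ is the restriction of a sufficiently large $\bU$-representation. Then $V\otimes W$ contains the trivial summand with multiplicity, and one must arrange that it equals exactly a restriction.
\end{itemize}
The cleanest device is induction. Set $W=V^*\otimes \mathrm{Res}\,\mathrm{Ind}_{\bK}^{\bU}\mathbf 1$ suitably truncated. Alternatively, use $\dim$-$1$ summands: for a character $\chi$, $\chi\otimes \mathrm{Res}(\mathrm{Ind}\chi^{-1})$ contains $\mathbf 1$. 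Character theory (Peter--Weyl on $\bU/\bK$) then shows that some finite-dimensional $\bU$-subrepresentation $E\subset \mathrm{Ind}_{\bK}^{\bU}\chi^{-1}$ has $\chi\otimes\mathrm{Res}E$ containing, and then after enlarging equal to, a restriction. This step is where I expect the real work.

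\textbf{Step 2: localize over orbits.} Combine Step 1 with \Cref{pr:loc.ntrl}\Cref{item:pr:loc.ntrl:orb.ntrl}: every bundle$_{\bU}$ over an orbit $\bU x\cong\bU/\bU_x$ is neutral after the appropriate tensoring or unital embedding. By the slice theorem, an invariant tubular neighborhood $\bU\times_{\bU_x}S$ equivariantly retracts onto the orbit. Local triviality of the $C^*$ bundle then transports neutral embeddings from the orbit to such a neighborhood.

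\textbf{Step 3: finite covers.} This step runs the argument for \Cref{th:which.cpct.lie.gps.loc}\Cref{item:th:which.cpct.lie.gps.loc:alg:ft}$\Leftrightarrow$\Cref{item:th:which.cpct.lie.gps.loc:alg:emb}, using only the orbits present.
\begin{itemize}
\item For the forward direction, finite type gives a finite invariant cover on each member of which the bundle is trivial in the equivariant sense, i.e.\ induced from a bundle over a single orbit type $\bU\times_{\bU_x}(\cdot)$. Applying Step 2 uniformly on each member yields the neutral unital embeddings.
\item For the converse, a unital subbundle of a neutral matrix bundle over an invariant open set is finite type there. It has finitely many isomorphism types of fibers, and the \Cref{th:ft.iff.ext.cpct}\Cref{item:th:ft.iff.ext.cpct:some.ind}-type criterion applies, with the structure group extended into the automorphism group of the neutral bundle.
\item Finally, finitely many finite-type pieces glue to a finite-type bundle.
\end{itemize}
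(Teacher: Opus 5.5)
Your architecture is the paper's: its proof just combines \Cref{cor:vrt.ab} (neutrality of $\bU/\bH$ for virtually abelian $\bH$) with (the orbitwise form of) \Cref{th:which.cpct.lie.gps.loc}. Those are what your Step 1 and your Steps 2--3 aim at.

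The genuine gap is Step 1, which you flag but do not close. After reducing to a closed abelian $\bK\le \bU_x$, Frobenius reciprocity and Peter--Weyl only give a finite-dimensional $E\subseteq \mathrm{Ind}_{\bK}^{\bU}\chi^{-1}$ with $\chi\otimes E|_{\bK}$ \emph{containing} something restricted from $\bU$. Nothing you say makes it \emph{equal} to a restriction after ``enlarging'', and that equality is the whole content of $\bU\overset{\circlearrowright}{/}\bK$. You also never pass from single characters to $V=\bigoplus_i\chi_i$ with one common complement, and tensoring the individual complements together does not in general work. The real difficulty is twofold: landing inside the image of $R(\bU)\to R(\bK)$ at all, and then making the resulting virtual character an honest representation. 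Reciprocity addresses neither.

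Moreover, ``abelian'' is the wrong place to stop, since a closed abelian subgroup need not lie in a torus (e.g.\ the diagonal Klein four-group in $SO(3)$). Instead apply \Cref{th:fin.ind.dscnt}\Cref{item:th:fin.ind.dscnt:fin.ind.l} to the finite-index identity component $\left(\bU_x\right)_0$, which is a torus. Then \Cref{pr:fin.tor.loc.ntrl}\Cref{item:pr:fin.tor.loc.ntrl:subtor.fin} applies. Its proof reduces, via \Cref{pr:loc.ntrl.fin} and \Cref{pr:fin.tor.loc.ntrl}\Cref{item:pr:fin.tor.loc.ntrl:ab}, to a maximal torus $\bT\le \bU_0$. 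It replaces $\rho\in\Rep(\bT)$ by $\wt{\rho}=\bigotimes_{w\in W(\bT,\bU_0)}w\rho$, which has $\rho$ as a tensor factor and, by $R(\bU_0)\cong R(\bT)^{W(\bT,\bU_0)}$, is restricted from a virtual character of $\bU_0$. Finally it uses the cited result that such a virtual character becomes an honest restricted representation after tensoring with some $\psi\in \Rep(\bU_0)$. That is exactly \Cref{cor:vrt.ab}(1), which you could simply have cited.

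Steps 2--3 follow the intended route but need two repairs. In the forward direction, finite type does \emph{not} give local objects over orbits $\bU/\bU_x$ of $X$. It gives bundle maps to local objects over $\bU/\bH_j$ for arbitrary closed $\bH_j$, which need not be virtually abelian, so Step 1 does not apply to them directly. You must first refine by tubes about orbits of $X$ (Step 2). There, it is local triviality together with shrinking the slice, not the retraction alone, that makes the bundle a pullback from $\bU/\bU_x$. Finitely many tubes suffice when $X$ is compact, but for a general normal $X$ you owe an argument producing a finite cover.

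In the converse, extending the structure group of $\cA$ along some $\mathrm{Aut}(A)\to \mathrm{Aut}(M)$ does not reproduce the neutral bundle, so the \Cref{item:th:ft.iff.ext.cpct:some.ind}-criterion is not what applies. Instead note that $\cA$ is a direct summand of a neutral vector bundle, hence finite-type as a vector bundle$_{\bU}$. Then use \Cref{th:ft.iff.ext.cpct}\Cref{item:th:ft.iff.ext.cpct:some.rep}$\Rightarrow$\Cref{item:th:ft.iff.ext.cpct:ft} with the faithful representation $\mathrm{Aut}(A)\circlearrowright A$, or the inheritance of finite type by subalgebra bundles that the paper cites.
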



\section{Extending bundles along equivariant compactifications}\label{se:ext.eqv.cpctf}

Bundles (principal, vector, etc.) \emph{equivariant} under appropriately compatible actions by a topological (here, exclusively compact Lie) group $\bU$ are referred to as \emph{bundles$_{\bU}$}. For the well-developed theory the reader can consult \cite[\S I.8]{td_transf-gp}, \cite[Chapter VII]{may_eq-hotop_1996} or \cite{MR885537} say, with more specific references given below as needed. 

\begin{remark}\label{re:triv}
  \emph{Triviality} is somewhat semantically ambiguous in the context of equivariant bundles:
  \begin{itemize}[wide]
  \item On the one hand, in language compatible with what \cite[post Lemma I.8.9 and Definition I.9.1]{td_transf-gp} term \emph{locally trivial} bundles$_{\bU}$ (whether principal or vector), one might mean those objects admitting bundle$_{\bU}$ morphisms to \emph{local objects} \cite[post Example I.8.8]{td_transf-gp}: bundles$_{\bU}$ over homogeneous spaces attached to closed subgroups $\bH\le \bU$. 

  \item On the other, for vector bundles$_{\bU}$ $\cE\xrightarrowdbl{} X$ in particular, triviality is initially construed in \cite[\S I.9]{td_transf-gp} more restrictively: lying in the image of the (\emph{symmetric monoidal} \cite[Definitions 8.1.7 and 8.1.12]{egno}) functor
    \begin{equation}\label{eq:reps.to.bdls}
      \Rep(\bU)
      \ni \left(\rho:\bU\circlearrowright V\right)
      \xmapsto{\quad}
      \rho_X
      =
      V_X
      :=
      X\times V
    \end{equation}
    on the category of finite-dimensional $\bU$-representations (with the diagonal $\bU$-action on $V_X$). This is indeed more restrictive, as it amounts to being a pullback through the $\bU$-space map $X\to \{*\}$. 
  \end{itemize}
  To avoid ambiguity, we refer to these latter bundles as \emph{neutral}. The term applies to richer structure: a neutral algebra-bundle, for instance, is one lying in the image of \Cref{eq:reps.to.bdls}'s lift to a functor between the respective categories of internal algebras.
\end{remark}

Some of the subsequent discussion relies on the notion of \emph{equivariant compactification} $X\to \beta_{\bU}X$ for a $\bU$-space $X$: the $X$ component of the \emph{unit} \cite[\S IV.1, post Theorem 1]{mcl_2e} of an adjunction, with $\beta_{\bU}$ being left adjoint to the inclusion functor
\begin{equation*}
  \left(\text{compact Hausdorff $\bU$-spaces}\right)
  =:
  \cat{Top}_{T_2,\kappa}^{\bU}
  \lhook\joinrel\xrightarrow{\quad}
  \cat{Top}_{T_{3\frac 12}}^{\bU}
  :=
  \left(\text{$T_{3\frac 12}$ $\bU$-spaces}\right);
\end{equation*}
that left adjoint exists provided $\bU$ is locally compact (Hausdorff) \cite[Proposition 3.1]{dvr-ex}.

\begin{remark}\label{re:cpctf.ladj}
  That left adjointness description of $\beta_{\bU}$ ensures some of the expected permanence properties hold: the canonical map
  \begin{equation*}
    \beta_{\bU}X/\bK
    \xrightarrow{\quad}
    \beta_{\bU/\bK}\left(X/\bK\right)
  \end{equation*}
  is a homeomorphism for instance, for normal subgroups $\bK\trianglelefteq \bU$ and $T_{3\frac 12}$ $\bU$-spaces $X$. This is proven in a more ad-hoc fashion as \cite[Theorem 4.9]{MR2166267}, but it suffices to observe that the upper and lower functor compositions
  \begin{equation*}
    \begin{tikzpicture}[>=stealth,auto,baseline=(current  bounding  box.center)]
      \path[anchor=base] 
      (0,0) node (l) {$\cat{Top}_{T_{3\frac 12}}^{\bU}$}
      +(3,.5) node (u) {$\cat{Top}_{T_2,\kappa}^{\bU}$}
      +(3,-.5) node (d) {$\cat{Top}_{T_{3\frac 12}}^{\bK}$}
      +(6,0) node (r) {$\cat{Top}_{T_2, \kappa}^{\bK}$}
      ;
      \draw[->] (l) to[bend left=6] node[pos=.5,auto] {$\scriptstyle \beta_{\bU}$} (u);
      \draw[->] (u) to[bend left=6] node[pos=.5,auto] {$\scriptstyle /\bK$} (r);
      \draw[->] (l) to[bend right=6] node[pos=.5,auto,swap] {$\scriptstyle /\bK$} (d);
      \draw[->] (d) to[bend right=6] node[pos=.5,auto,swap] {$\scriptstyle \beta_{\bU/\bK}$} (r);
    \end{tikzpicture}
  \end{equation*}
  are both left adjoint to the selfsame inclusion-restriction functor casting a compact $T_2$ $\bU/\bK$-space as a $T_{3\frac 12}$ $\bU$-space (for compositions of left adjoints are left adjoint \cite[Proposition 3.2.1]{brcx_hndbk-1}). 
\end{remark}

The same adjoint-composition principle drives also the following simple remark, perhaps apposite at this point (though not, strictly speaking, needed below). 

\begin{lemma}\label{le:ind.act}
  For any compact-group morphism $\bH\to \bU$ and $T_{3\frac 12}$ $\bH$-space $\bH\circlearrowright X$ the canonical $\bU$-equivariant map $\bU\times_{\bH}\beta_{\bH}X\to \beta_{\bU}X$ is a $\bU$-space isomorphism. 
\end{lemma}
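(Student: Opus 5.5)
The plan is to run the same adjoint-composition argument as in \Cref{re:cpctf.ladj}. Here $\beta_{\bU}X$ is read as the universal $\bU$-equivariant compactification of the induced $\bU$-space $\bU\times_{\bH}X$. Write $\varphi:\bH\to\bU$ for the given morphism. Recall that $\bU\times_{\bH}Y$ is the quotient of $\bU\times Y$ by the $\bH$-action
\begin{equation*}
  h\cdot(u,y)=\left(u\varphi(h)^{-1},hy\right),
\end{equation*}
with $\bU$ acting by left translation on the first factor. I will show that both composites
\begin{equation*}
  \cat{Top}_{T_{3\frac 12}}^{\bH}\xrightarrow{\ \beta_{\bH}\ }\cat{Top}_{T_2,\kappa}^{\bH}\xrightarrow{\ \bU\times_{\bH}-\ }\cat{Top}_{T_2,\kappa}^{\bU}
  \quad\text{and}\quad
  \cat{Top}_{T_{3\frac 12}}^{\bH}\xrightarrow{\ \bU\times_{\bH}-\ }\cat{Top}_{T_{3\frac 12}}^{\bU}\xrightarrow{\ \beta_{\bU}\ }\cat{Top}_{T_2,\kappa}^{\bU}
\end{equation*}
are left adjoint to one and the same functor: regard a compact Hausdorff $\bU$-space as an $\bH$-space via $\varphi$, then as a $T_{3\frac 12}$ $\bH$-space. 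This functor is the same whichever order one restricts and includes, because the two operations visibly commute.

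\textbf{Step 1 (induction is well defined).} First I check that $\bU\times_{\bH}-$ lands in the right categories. If $Y$ is compact Hausdorff, then $\bU\times_{\bH}Y$ is a quotient of the compact space $\bU\times Y$ by a compact group action, hence compact Hausdorff. If $Y$ is $T_{3\frac 12}$, then $\bU\times Y$ is $T_{3\frac 12}$, and orbit spaces of compact-group actions on Tychonoff spaces are again Tychonoff. This is a standard fact, cf.\ \cite{td_transf-gp}: the quotient map is open and perfect, and one separates points from closed sets by averaging functions over $\bH$.

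\textbf{Step 2 (induction is left adjoint to restriction).} Next I check that $\bU\times_{\bH}-$ is left adjoint to restriction along $\varphi$, in both the compact and the Tychonoff settings. The bijection
\begin{equation*}
  \mathrm{Hom}_{\bU}\left(\bU\times_{\bH}Y,Z\right)\cong \mathrm{Hom}_{\bH}\left(Y,\mathrm{res}\,Z\right)
\end{equation*}
sends $F$ to $y\mapsto F[1,y]$, with inverse sending $f$ to $[u,y]\mapsto u f(y)$. This inverse is well defined by $\bH$-equivariance of $f$. It is continuous because $\bU\times Y\to\bU\times_{\bH}Y$ is a quotient map and $(u,y)\mapsto uf(y)$ is continuous. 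Naturality is immediate.

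\textbf{Step 3 (conclusion).} Since compositions of left adjoints are left adjoint \cite[Proposition 3.2.1]{brcx_hndbk-1}, both composites above are left adjoint to the same restriction-inclusion functor. Uniqueness of left adjoints up to unique natural isomorphism then gives a $\bU$-homeomorphism
\begin{equation*}
  \bU\times_{\bH}\beta_{\bH}X\cong\beta_{\bU}\left(\bU\times_{\bH}X\right).
\end{equation*}
The only remaining check, and the mildly fiddly step, is that this isomorphism is the canonical map of the statement. To see it, trace the units. The comparison map is the unique $\bU$-map out of $\beta_{\bU}(\bU\times_{\bH}X)$ extending $\bU\times_{\bH}(X\to\beta_{\bH}X)$, respectively its adjoint in the other direction. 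Both the canonical map and the adjoint-uniqueness isomorphism are compatible with these units, so by the universal property they coincide. The genuine content is in Steps 1 and 2; of these I expect the Tychonoff-ness of $\bU\times_{\bH}X$ in Step 1 to be the main point needing a citation rather than a one-line check.
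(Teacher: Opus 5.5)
Your proposal is correct and follows the paper's own proof: both show that $\bU\times_{\bH}\beta_{\bH}(-)$ and $\beta_{\bU}(\bU\times_{\bH}-)$ are left adjoint to the same restriction-inclusion functor $\cat{Top}_{T_2,\kappa}^{\bU}\to\cat{Top}_{T_{3\frac 12}}^{\bH}$, and you read $\beta_{\bU}X$ as $\beta_{\bU}(\bU\times_{\bH}X)$, exactly as the paper's diagram does. The paper states this in one line; you additionally supply the routine checks it leaves implicit, namely that induction preserves compact Hausdorff and Tychonoff spaces, that induction is left adjoint to restriction, and that the isomorphism from uniqueness of adjoints is the canonical map (by comparing units).
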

\begin{proof}
  As in \Cref{re:cpctf.ladj}: the two rightward functorial compositions in
  \begin{equation*}
    \begin{tikzpicture}[>=stealth,auto,baseline=(current  bounding  box.center)]
      \path[anchor=base] 
      (0,0) node (l) {$\cat{Top}_{T_{3\frac 12}}^{\bH}$}
      +(3,.5) node (u) {$\cat{Top}_{T_2,\kappa}^{\bH}$}
      +(3,-.5) node (d) {$\cat{Top}_{T_{3\frac 12}}^{\bU}$}
      +(6,0) node (r) {$\cat{Top}_{T_2, \kappa}^{\bU}$}
      ;
      \draw[->] (l) to[bend left=6] node[pos=.5,auto] {$\scriptstyle \beta_{\bH}$} (u);
      \draw[->] (u) to[bend left=6] node[pos=.5,auto] {$\scriptstyle \bU\times_{\bH}$} (r);
      \draw[->] (l) to[bend right=6] node[pos=.5,auto,swap] {$\scriptstyle \bU\times_{\bH}$} (d);
      \draw[->] (d) to[bend right=6] node[pos=.5,auto,swap] {$\scriptstyle \beta_{\bU}$} (r);
    \end{tikzpicture}
  \end{equation*}
  are both left adjoint to the forgetful functor $\cat{Top}_{T_2, \kappa}^{\bU}\to\cat{Top}_{T_{3\frac 12}}^{\bH}$.
\end{proof}

\pf{th:ft.iff.ext.cpct}
\begin{th:ft.iff.ext.cpct}
  \begin{enumerate}[label={},wide]
  \item\textbf{\Cref{item:th:ft.iff.ext.cpct:ft} $\Rightarrow$ \Cref{item:th:ft.iff.ext.cpct:fin.cw}:} The finite-type assumption ensures the existence, for some finite open cover $\left(U_j\right)_j$ of $X$, of $\bU$-equivariant bundle maps $\cP|_{U_j}\to \left(E\xrightarrowdbl{} \bU/\bH\right)$: the local objects of \Cref{re:triv}. The total spaces of those local-object bundles are compact by \cite[Lemma I.8.9 and preceding discussion]{td_transf-gp} (and our compactness assumptions on $\bU$, $\bG$), hence the existence (as in the proof of \cite[Theorem I.8.12]{td_transf-gp}) of a bundle$_{\bU}$ map
    \begin{equation*}
      \left(\cP\xrightarrowdbl{\quad}X\right)
      \xrightarrow{\quad}
      E_1*\cdots *E_n
      ,\quad
      \left(\text{"$*$" meaning topological \emph{join} \cite[\S 14.4.3]{td_alg-top}}\right)
    \end{equation*}
    which recovers $\cP$ as a pullback. 

  \item\textbf{\Cref{item:th:ft.iff.ext.cpct:fin.cw} $\Rightarrow$ \Cref{item:th:ft.iff.ext.cpct:univ.cpct}:} Having realized $\cP$ as a pullback through a map $X\to Y$ to \emph{any} compact $T_2$ $\bU$-space, pull back intermediately to the maximal compactification (by its defining universal property) per the commutative diagram
    \begin{equation*}
      \begin{tikzpicture}[>=stealth,auto,baseline=(current  bounding  box.center)]
        \path[anchor=base] 
        (0,0) node (l) {$X$}
        +(2,.5) node (u) {$\beta_{\bU}X$}
        +(4,0) node (r) {$Y$}
        ;
        \draw[right hook->] (l) to[bend left=6] node[pos=.5,auto] {$\scriptstyle $} (u);
        \draw[->] (u) to[bend left=6] node[pos=.5,auto] {$\scriptstyle $} (r);
        \draw[->] (l) to[bend right=6] node[pos=.5,auto,swap] {$\scriptstyle $} (r);
      \end{tikzpicture}
    \end{equation*}    
    
  \item\textbf{\Cref{item:th:ft.iff.ext.cpct:univ.cpct} $\Rightarrow$ \Cref{item:th:ft.iff.ext.cpct:cpct} $\Rightarrow$ \Cref{item:th:ft.iff.ext.cpct:ft}:} The first is immediate and the second follows from the fact that being of finite type is preserved by pullbacks and automatic for compact base spaces.

  \item\textbf{\Cref{item:th:ft.iff.ext.cpct:ft} $\Rightarrow$ \Cref{item:th:ft.iff.ext.cpct:all.ind} and \Cref{item:th:ft.iff.ext.cpct:all.rep} $\Rightarrow$ \Cref{item:th:ft.iff.ext.cpct:some.rep}} are immediate.

  \item\textbf{\Cref{item:th:ft.iff.ext.cpct:all.ind} $\Rightarrow$ \Cref{item:th:ft.iff.ext.cpct:all.rep} and \Cref{item:th:ft.iff.ext.cpct:some.rep} $\Leftrightarrow$ \Cref{item:th:ft.iff.ext.cpct:some.ind}} are instances of the back-and-forth passage \cite[Assertion 18.3.4]{hjjm_bdle} between principal $U(n)$- ($O(n)$-)bundles and rank-$n$ complex (respectively real) vector bundles.

    This reduces the entire suite of mutual equivalences to only the concluding next step. 

  \item\textbf{\Cref{item:th:ft.iff.ext.cpct:some.ind} $\Rightarrow$ \Cref{item:th:ft.iff.ext.cpct:ft}:} Assume $\bG\le \bK$ embedded $\bU$-equivariantly. The induced bundle $\cP\times_{\bG}\bK$ extends across the equivariant compactification $X\lhook\joinrel\to \beta_{\bU}X$ by the already-established equivalence \Cref{item:th:ft.iff.ext.cpct:ft} $\Leftrightarrow$ \Cref{item:th:ft.iff.ext.cpct:univ.cpct}. Its being both finite-type and reducible to structure group $\bG$ over $X$, a trivialization over a finite cover $\cU=\left(U_i\right)_{i=1}^n$ of $\beta_{\bU} X$ provides
    \begin{itemize}
    \item $\bU$-equivariant \emph{transition functions}
      \begin{equation*}
        U_i\cap U_j
        \xrightarrow{\quad g_{ij}\quad}
        \bK;
      \end{equation*}
    \item along with equivariant \emph{gauge transformations}
      \begin{equation*}
        X\cap U_i
        =:
        U_{X,i}\xrightarrow{\lambda_i}\bG
      \end{equation*}
      with
      \begin{equation}\label{eq:land.in.g}
        \forall\left(1\le i,j\le n\right)
        \bigg(\left(\lambda_i^{-1}g_{ij}\lambda_j\right)\left(U_{X,i}\cap U_{X,j}\right)\subseteq \bG\bigg):
      \end{equation}
    \end{itemize}
    this is the equivariant version of the familiar transition-function formalism for principal bundles generally \cite[\S 5.2]{hus_fib} and applicable to structure-group reduction in particular \cite[Theorem 6.4.1]{hus_fib}. The $\lambda_i$ constitute a single continuous $\bU$-map $X\to \bG^{*n}$ which extends to all of $\beta_{\bU}X$ by the latter's universal property. \Cref{eq:land.in.g} holds over $U_i\cap U_j$ by continuity, hence the reduction of $\cP\times_{\bG}\bK$'s extension to $\beta_{\bU}X$ to structure group $\bG$.  \qedhere
  \end{enumerate}
\end{th:ft.iff.ext.cpct}

\begin{remarks}\label{res:mtrz.as.vect}
  \begin{enumerate}[(1),wide]
  \item\label{item:res:mtrz.as.vect:rec.upu} To recover the matrix/vector-bundle dichotomy operative in \cite[Proposition 2.9]{zbMATH05172034} in the context of \Cref{th:ft.iff.ext.cpct} specialize the compact-Lie embedding $\bG\le \bK$ to the faithful representation $PU(n)\le U(n^2)$ resulting from the projective unitary group's conjugation action on the $n\times n$-matrix algebra. This takes for granted the usual \cite[Assertion 18.3.4]{hjjm_bdle} conflation of vector (matrix) bundles with (projective) unitary principal bundles.

  \item\label{item:res:mtrz.as.vect:plb.cpct} Pulling back along maps to compact spaces (which featured in the proof of \Cref{th:ft.iff.ext.cpct}) suggests a slight simplification of the proof of \cite[Lemma 2.7]{zbMATH05172034}, to the effect that vector bundles $\cE\xrightarrowdbl{}X$ with compact Hausdorff base admit ``multiplicative complements'': non-zero vector bundles $\cF$ with $\cE\otimes \cF$ trivial. \cite[Lemma 12]{zbMATH03329610} covers the connected, finite-complex-base case, and the former result leverages this by expressing arbitrary compact spaces as \emph{cofiltered limits} \spr{04AY} of finite complexes.
    
    One can dispense with this limiting aspect by observing (along with the fact that $\cF$'s existence pulls back along maps) that arbitrary compact-base vector bundles$_{\bU}$ pull back along equivariant maps to Grassmannians of finite-dimensional $\bU$-representations \cite[(ii) post Theorem 2.5]{seg}. In \cite[Lemma 2.7]{zbMATH05172034} in its original form and whatever equivariant versions thereof one might be interested in, then,
    \begin{itemize}[wide]
    \item the base space can be assumed a Grassmannian of a $\bU$-representation (so in particular a compact smooth $\bU$-manifold);

    \item and the bundle can be assumed tautological over that Grassmannian: the bundle of $q$-planes over the $q$-plane Grassmannian $\bG(q,V)$ whose fiber over $W\le V$ ($\dim W=q$) is precisely $W$ (cf. \cite[Example VII.3.1]{may_eq-hotop_1996}).
    \end{itemize}

  \item\label{item:res:mtrz.as.vect:good.gp} By the same token, in a direct equivariant version of \cite[Lemma 12]{zbMATH03329610} the acting group $\bU$ can be assumed unitary (orthogonal if working with real bundles, etc.). Equally (to the extent that this is convenient), said group can be assumed simple either Lie-theoretically (connected with simple Lie algebra) or algebraically (no non-obvious normal subgroups): a faithful representation $\bU\lhook\joinrel\to U(n)$ can always be extended to an embedding into a special or projective unitary group.

  \item\label{item:res:mtrz.as.vect:cech.cont} The cofiltered-limit argument recalled briefly in item \Cref{item:res:mtrz.as.vect:plb.cpct} above also relies implicitly on the continuity of the contravariant functor
    \begin{equation*}
      \left(\text{compact $T_2$ spaces}\right)
      \ni
      X
      \xmapsto{\quad}
      \left(\text{iso-classes of rank-$q$ vector bundles/$X$}\right)
    \end{equation*}
    with respect to such limits. This follows, say, from \cite[Theorem 4 and Remark 4.11]{MR234450}.
  \end{enumerate}  
\end{remarks}

As actions $\bU\circlearrowright X$ with finite \emph{isotropy groups} 
\begin{equation*}
  \bU_x:=\left\{g\in \bU\ :\ gx=x\right\}
  ,\quad
  x\in X
\end{equation*}
will feature in the sequel, note that the actions of \Cref{th:ft.iff.ext.cpct}, supporting finite-type bundles$_{\bU}$, need not extend with finite isotropy to $\beta_{\bU}X$.

\begin{example}\label{ex:no.fin.istrp}
  By the very definition \cite[\S VII.1]{may_eq-hotop_1996} of a trivial principal $\bG$-bundle$_{\bU}$, \emph{any} $\bU$-space supports one such. Given $\bU\circlearrowright X$ with finite isotropy groups with unbounded order (e.g. the circle $\bU:=\bS^1$ acting on $X:=\coprod_{n\in \bZ_{>0}}\bS^1$ via the respective quotient maps $z\mapsto z^n$), the easily proven \emph{upper semicontinuity} \cite[Definition 7.1.1]{kt_corresp} of
  \begin{equation*}
    \beta_{\bU}X\ni y
    \xmapsto{\quad}
    \bU_y
    \in
    \left\{\text{closed subgroups of $\bU$}\right\}
  \end{equation*}
  in the \emph{Vietoris topology} ensures infinite isotropy at some $y\in \beta_{\bU}X\setminus X$.
\end{example}

\Cref{ex:no.fin.istrp} suggests the natural question of whether assuming \emph{bounded-order} isotropy might remedy the issue; it does not (even for \emph{trivial} isotropy, i.e. free actions), as the following remark, very much also in the spirit of \cite[Example 3.8]{anton_equiv-emb}, shows.

\begin{lemma}\label{le:s1.act.on.hilb.sph}
  Let $\ell^2_{1}$ be the unit sphere in the countable-dimensional Hilbert space, equipped with the free scaling action $\bS^1\circlearrowright \ell^2_1$.
   
  The universal compactification $\beta_{\bS^1}\ell^2_1$ has at least one $\bS^1$-fixed point, hence so do all equivariant compactifications of $\ell^2_1$.
\end{lemma}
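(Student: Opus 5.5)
The plan is to argue by contradiction via an equivariant Borsuk--Ulam obstruction, using the left-adjoint description of $\beta_{\bS^1}$ only to produce an equivariant map from $\ell^2_1$ to a compact space without fixed points. The second clause of the statement is immediate from the first: any equivariant compactification $X\to \ol{X}$ factors through $\beta_{\bS^1}X\to \ol{X}$ by the universal property, and equivariant maps send fixed points to fixed points. So it suffices to show that $Y:=\beta_{\bS^1}\ell^2_1$ has an $\bS^1$-fixed point.

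\textbf{Step 1: a map to a representation sphere.} Suppose $Y$ has no fixed point. The closed subgroups of $\bS^1$ are $\bS^1$ itself and the finite cyclic groups, so every isotropy group $\bS^1_y$, $y\in Y$, is finite. By the slice theorem, each orbit $\bS^1 y\cong \bS^1/\bS^1_y$ has an invariant tube neighbourhood $\bS^1\times_{\bS^1_y}S_y$. On such a tube there is an equivariant map to the unit sphere of a one-dimensional representation $\bC_{n_y}$ with $n_y\ne 0$ and $\bS^1_y\subseteq \mu_{n_y}$: compose the projection to $\bS^1/\bS^1_y$ with $z\mapsto z^{n_y}$.

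Compactness of $Y$ leaves finitely many such tubes. Choose an invariant partition of unity subordinate to them, obtained by averaging over $\bS^1$. Glue the local maps in the usual join fashion (as in the proof of \Cref{th:ft.iff.ext.cpct}, \Cref{item:th:ft.iff.ext.cpct:ft} $\Rightarrow$ \Cref{item:th:ft.iff.ext.cpct:fin.cw}). This produces an equivariant map $Y\to S(V)$, where $V=\bigoplus_{i=1}^d\bC_{n_i}$ has all weights $n_i\ne 0$. In particular $V^{\bS^1}=0$, so the image avoids the origin and the normalization is legitimate.

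\textbf{Step 2: restrict to a finite-dimensional sphere.} Composing with $\ell^2_1\to Y$ and restricting to the unit sphere $S^{2m+1}\subset \bC^{m+1}\subset \ell^2$ with $m\ge d$ gives an $\bS^1$-map $\phi:S^{2m+1}\to S(V)$. Here the source carries the free scaling action.

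\textbf{Step 3: cohomological Borsuk--Ulam.} Take rational Borel-equivariant cohomology. Since the action on $S^{2m+1}$ is free,
\begin{equation*}
  H^*_{\bS^1}(S^{2m+1};\mathbb{Q})\cong H^*(\mathbb{CP}^m;\mathbb{Q})=\mathbb{Q}[t]/(t^{m+1}).
\end{equation*}
On the target side, the Gysin sequence of the sphere bundle $S(V)_{h\bS^1}\to B\bS^1$ gives
\begin{equation*}
  H^*_{\bS^1}(S(V);\mathbb{Q})\cong \mathbb{Q}[t]/(e(V)),
  \qquad
  e(V)=\Big(\prod_i n_i\Big)t^d\ne 0 .
\end{equation*}
Hence $t^d=0$ in $H^*_{\bS^1}(S(V);\mathbb{Q})$. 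The map $\phi^*$ is a $\mathbb{Q}[t]$-algebra map, because both sides are modules over $H^*(B\bS^1)$ compatibly. Therefore $t^d=\phi^*(t^d)=0$ in $\mathbb{Q}[t]/(t^{m+1})$, which is false since $d\le m$. This contradiction produces the fixed point.

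\textbf{Main obstacle.} I expect Step 1 to be the delicate part. One has to confirm that the slice theorem and the finite-cover gluing apply to the possibly non-metrizable compact $T_2$ space $Y$. The slice theorem does hold for compact Lie group actions on completely regular spaces (Gleason/Mostow/Palais), and the partition of unity exists by normality of $Y$. The rest is standard.
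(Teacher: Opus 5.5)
Your argument is correct. It shares the paper's skeleton: tubes plus an invariant partition of unity give an equivariant map from the compactification into a finite-dimensional sphere, which is then restricted to large odd spheres $S^{2m+1}\subset\ell^2_1$ to contradict a Borsuk--Ulam-type theorem. The two proofs differ in how they handle isotropy and in which Borsuk--Ulam statement they use.

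The paper shows that no equivariant compactification $\ol{X}$ can have all isotropy inside a fixed cyclic $\bZ/n$. Otherwise $\bZ/m$ with $\gcd(m,n)=1$ acts freely on $\ol{X}$, giving a $\bZ/m$-map into the free $\bZ/m$-sphere $(\bZ/m)^{*(k+1)}\subset\bS^{2k+1}$. The cited Borsuk--Ulam theorem for free finite-group actions then forbids $\bZ/m$-maps $\bS^{2q+1}\to\bS^{2k+1}$ with $q>k$. Finally, unbounded isotropy in $\beta_{\bS^1}\ell^2_1$ is turned into an actual fixed point via Vietoris upper semicontinuity of $y\mapsto\bS^1_y$ and compactness.

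You instead assume outright that $\beta_{\bS^1}\ell^2_1$ has no fixed point, keep the whole circle, and allow finite isotropy on the target $S(V)$. Rational Borel cohomology makes that isotropy irrelevant: all that matters is that the Euler class $(\prod_i n_i)t^d$ is nonzero, i.e. that no weight $n_i$ vanishes. This makes your argument self-contained (no coprimality trick, no semicontinuity step, only the Gysin relation and naturality over $H^*(B\bS^1;\mathbb{Q})$), at the cost of using Borel cohomology instead of a black-boxed finite-group Borsuk--Ulam theorem.

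One small correction to Step 1. The glued map $y\mapsto(\phi_i(y)f_i(y))_i$ is nonzero everywhere because $\sum_i\phi_i=1$ and $|f_i|=1$, not because $V^{\bS^1}=0$; an equivariant map from a fixed-point-free space can perfectly well hit the origin. The condition $V^{\bS^1}=0$ is used only in Step 3, to make $e(V)\neq 0$ rationally.
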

\begin{proof}
  This is an adaptation of \cite[Example 3.6]{MR2166267}, where the acting group is $\bZ/2$ and the Hilbert space involved is real (the conclusion there being that the universal action on $\beta_{\bZ/2}\ell^2_{1}$ must have fixed points).
  
  It suffices to argue, by the Vietoris upper semicontinuity noted in \Cref{ex:no.fin.istrp}, that no equivariant compactification can have bounded isotropy (equivalently, $\beta_{\bS^1}\ell^2_1$ does not). 
  \begin{itemize}[wide]
  \item Were there a bounded-isotropy compactification $\ol{X}$ of $X:=\ell^2_{1}$, the isotropy groups $\bS^1_y$, $y\in \ol{X}$ would all be contained in a cyclic subgroup of $\bS^1$ of order $n$, say.

  \item For $\gcd(m,n)=1$, $\bZ/m$ would then act freely on $\ol{X}$.

  \item This would produce a $\bZ/m$-equivariant map
    \begin{equation*}
      X
      \lhook\joinrel\xrightarrow{\quad}
      \ol{X}
      \to
      \left(\bZ/m\right)^{* (k+1)}
      \lhook\joinrel\xrightarrow{\quad}
      \left(\bS^1\right)^{* (k+1)}
      \cong
      \bS^{2k+1}
    \end{equation*}
    for some $k\in \bZ_{\ge 0}$ (``$*$'' denoting joins): locally by \emph{tube} existence \cite[Theorem I.5.7]{td_transf-gp} and thence globally by the usual partition-of-unity argument \cite[proof of Theorem I.8.12]{td_transf-gp}.

  \item Arbitrarily large odd spheres $\bS^{2q+1}$ being $\bS^1$-equivariantly embeddable in $X$, this produces $\bZ/m$-equivariant maps $\bS^{2q+1}\to \bS^{2k+1}$, $q> k$ which contradict one version \cite[Theorem, p.65]{MR711043} of the \emph{Borsuk-Ulam theorem}.
  \end{itemize}
\end{proof}

A somewhat different strategy will prove more: \emph{all} infinite compact Lie groups admit free actions $\bU\circlearrowright X$ on metrizable spaces with infinite-isotropy points in $\beta_{\bU}X$. Rather than unit spheres of Hilbert-space representations, we can take for $X$ the total space of \emph{Milnor's model} \cite[\S 14.4.3]{td_alg-top}
\begin{equation*}
  E\bU
  :=
  \bigcup_n \left(E_n\bU:=\bU^{*(n+1)}\right)
\end{equation*}
for the universal principal $\bU$-bundle.  

\begin{proposition}\label{pr:all.inf.cpct.lie}
  For any compact Lie group $\bU$ the action $\bU\circlearrowright \beta_{\bU}E\bU$ has points fixed by any one maximal torus $\bT\le \bU_0$ of $\bU$'s identity component. 
\end{proposition}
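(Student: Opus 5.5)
The plan is to reduce to the torus $\bT$ and then run a Borsuk–Ulam argument, as in the proof of \Cref{le:s1.act.on.hilb.sph}.

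\textbf{Reduction to $\bT$.} Restricting the $\bU$-action on $\beta_{\bU}E\bU$ to $\bT$ gives a compact Hausdorff $\bT$-space receiving $E\bU$ $\bT$-equivariantly. The universal property of $\beta_{\bT}$ then yields a $\bT$-map $\beta_{\bT}E\bU\to \beta_{\bU}E\bU$, which carries $\bT$-fixed points to $\bT$-fixed points. It therefore suffices to show that $\beta_{\bT}E\bU$ has a $\bT$-fixed point, where $E\bU$ carries its (free) restricted $\bT$-action. Suppose, for a contradiction, that it has none.

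\textbf{Mapping to a fixed-point-free join.} Every isotropy group $\bT_y$, $y\in\beta_{\bT}E\bU$, is then a proper closed subgroup of $\bT$. The group $\bT$ is compact Lie and $\beta_{\bT}E\bU$ is compact, so tubes exist \cite[Theorem I.5.7]{td_transf-gp}. This gives a finite invariant open cover by tubes around orbits $\bT/\bH_1,\dots,\bT/\bH_n$ with each $\bH_i<\bT$ proper. The partition-of-unity argument \cite[proof of Theorem I.8.12]{td_transf-gp} then produces a $\bT$-map
\begin{equation*}
  E\bU\lhook\joinrel\xrightarrow{\quad}\beta_{\bT}E\bU\xrightarrow{\quad} Y:=\bT/\bH_1*\cdots*\bT/\bH_n ,
\end{equation*}
into a finite-dimensional compact $\bT$-space. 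Since $\bH_i\ne\bT$ and $\bT$ is connected, $\dim \bH_i<\dim\bT$, so each $\bH_i^0$ is a proper subtorus.

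\textbf{Choosing a circle and a prime.} Choose a circle subgroup $\bS\le\bT$ whose Lie-algebra direction is rational but avoids the finitely many proper subspaces $\mathrm{Lie}(\bH_i^0)$. Then each $\bS\cap\bH_i$ is finite, so the orders $|\bS\cap\bH_i|$ are bounded. Take a prime $p$ exceeding all of them. The cyclic group $\bZ/p\le\bS$ then meets every conjugate of every $\bH_i$ trivially; conjugates coincide here, since $\bT$ is abelian. Hence $\bZ/p$ acts freely on $Y$, which is a free $\bZ/p$-space of some finite dimension $d$.

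\textbf{Contradiction via Borsuk–Ulam.} On the other side, $E_N\bU=\bU^{*(N+1)}$ contains $\bS^{*(N+1)}\cong\bS^{2N+1}$, embedded $\bS$-equivariantly and so $\bZ/p$-equivariantly, with the free scaling action. Restricting the map above gives $\bZ/p$-equivariant maps $\bS^{2N+1}\to Y$ for all $N$. For $2N+1>d$ these contradict the Borsuk–Ulam variant \cite[Theorem, p.65]{MR711043} already used in \Cref{le:s1.act.on.hilb.sph}. That variant forbids equivariant maps from a highly connected free $\bZ/p$-space to a free one of lower dimension; here $Y$ is a finite join of finite-dimensional orbits, hence of bounded covering dimension.

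\textbf{Main obstacle.} I expect the hardest step to be justifying that the cited Borsuk–Ulam version applies with target $Y$. That requires checking that $Y$ is a finite-dimensional free $\bZ/p$-space in the sense the theorem needs (for instance a finite $\bZ/p$-CW complex, which the join of smooth orbits is). The choice of $\bS$ and $p$ so that freeness holds simultaneously for all $\bH_i$ is the other delicate point, handled by the genericity argument above.
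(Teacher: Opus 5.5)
Your argument is correct, and it takes a genuinely different route from the paper's.

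\textbf{The paper's route.} It writes $\bT$ as a Vietoris limit of finite $p$-subgroups $\bH_\alpha\le\bT$. It imports the external theorem \cite[Theorem 6.1]{MR1911520} that $\bH_\alpha\circlearrowright\beta_{\bH_\alpha}E\bH_\alpha$ has fixed points, and pushes these into $\beta_{\bU}E\bU$ via $E\bH_\alpha\subseteq E\bU$. It then takes a cluster point of a net of $\bH_\alpha$-fixed points; that point is fixed by the closure of $\bigcup_\alpha\bH_\alpha$, which is $\bT$.

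\textbf{Your route.} You transplant the Borsuk--Ulam argument of \Cref{le:s1.act.on.hilb.sph} directly to the torus. A fixed-point-free compact $\bT$-space receiving $E\bU$ yields a $\bT$-map $E\bU\to Y=\bT/\bH_1*\cdots*\bT/\bH_n$ with every $\bH_i<\bT$ proper. A generic rational circle $\bS$ and a prime $p>\max_i|\bS\cap\bH_i|$ then make $\bZ/p$ act freely on both $Y$ and the spheres $\bS^{*(N+1)}\cong\bS^{2N+1}\subseteq E_N\bU$. The freeness on $Y$ holds because isotropy groups of points of the join are intersections of the $\bH_i$.

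\textbf{The steps you flag as delicate go through.}
\begin{itemize}
\item $\bS\cap\bH_i$ is a closed subgroup of a circle not containing it, hence finite.
\item $Y$ is a compact polyhedron of finite dimension $d$ with a free $\bZ/p$-action.
\item $\bS^{2N+1}$ is $2N$-connected, so Dold's theorem \cite[Theorem, p.65]{MR711043} gives the contradiction once $2N\ge d$.
\end{itemize}
Two minor points. The passage to $\beta_{\bT}E\bU$ is harmless but unnecessary, since the same tube argument runs on $\beta_{\bU}E\bU$ viewed as a compact $\bT$-space. The degenerate case $\bT=\{1\}$ ($\bU$ finite) has no circle to choose, but there the claim is just non-emptiness.

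\textbf{What each approach buys.} Yours stays within tools the paper already uses: tubes, the partition-of-unity map into finite joins, and Dold's Borsuk--Ulam variant. It dispenses with both the external finite-$p$-group theorem and the Vietoris/net limit, in effect re-proving from scratch, for tori, the fixed-point phenomenon the paper imports. The paper's proof is shorter given that citation. It also records the extra information that each approximating $\bH_\alpha$ already has fixed points in the closure of $E\bH_\alpha$ inside $\beta_{\bU}E\bU$.
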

\begin{proof}
  $\bT$ is a Vietoris limit
  \begin{equation*}
    \bT=\lim_{\alpha}\bH_{\alpha}
    ,\quad
    \text{finite $p$-groups $\bH_{\alpha}\le \bT$}
  \end{equation*}
  for various primes $p$. Now, the $\bH_{\alpha}$-action $\bH_{\alpha}\circlearrowright \beta_{\bH_{\alpha}}E\bH_{\alpha}$ has fixed points \cite[Theorem 6.1]{MR1911520}, and hence so does the $\bH_{\alpha}$-action inherited via $\bH_{\alpha}\le \bT\le \bU$ on the closure of $E\bH_{\alpha}\subseteq E\bU$ in $\beta_{\bU}E\bU$. The limit of a convergent subnet of a net consisting of such respective $\bH_{\alpha}$-fixed points $x_{\alpha}\in \beta_{\bU}E\bU$ will be $\bT$-fixed. 
\end{proof}

\section{Trivial embeddability and neutral actions}\label{se:ntrl}

In addition to the (essentially two) alternative characterizations of the finite-type property touched upon by \Cref{th:ft.iff.ext.cpct}, involving respectively extensibility along compactifications and finite type after structure-group extension, \cite[Proposition 2.9]{zbMATH05172034} also equates finite type, for matrix bundles $\cA\xrightarrowdbl{}X$, with unital embeddability into trivial matrix bundles. This has \cite[Lemma 2.7]{zbMATH05172034} as an auxiliary, building on \cite[Lemma 12]{zbMATH03329610} in the manner sketched in \Cref{res:mtrz.as.vect}\Cref{item:res:mtrz.as.vect:plb.cpct}.

With a view towards extending such results equivariantly, we record here how trivializability of equivariant bundles by tensoring is not only sufficient, but also necessary for the algebra-bundle analogue (robustly to which class of compact Lie groups one considers as a basis for equivariantization). 

\begin{theorem}\label{th:which.cpct.lie.gps}
  The following conditions on a compact Lie group $\bU$ are equivalent.

  \begin{enumerate}[(a),wide]
  \item\label{item:th:which.cpct.lie.gps:alg}  For every normal $\bU$-space $X$, a locally trivial subhomogeneous $C^*$ bundle$_{\bU}$ $\cA\xrightarrowdbl{}X$
    \begin{enumerate}[(i),wide]
    \item\label{item:th:which.cpct.lie.gps:alg:ft} is finite-type
    \item\label{item:th:which.cpct.lie.gps:alg:emb} if and only if it embeds unitally into a neutral matrix (or subhomogeneous $C^*$) bundle$_{\bU}$;

    \item\label{item:th:which.cpct.lie.gps:alg:tens} and when it is a matrix bundle, also if and only if there is a matrix bundle$_{\bU}$ $\cB\xrightarrowdbl{}X$ with $\cA\otimes \cB$ neutral.  
    \end{enumerate}
  \item\label{item:th:which.cpct.lie.gps:vec} For every compact-$T_2$-base locally trivial vector bundle$_{\bU}$ $\cE\xrightarrowdbl{}X$ there is a non-zero such, $\cF$, with $\cE\otimes \cF$ neutral. 
  \end{enumerate}
\end{theorem}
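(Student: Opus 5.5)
We prove \Cref{item:th:which.cpct.lie.gps:vec} $\Rightarrow$ \Cref{item:th:which.cpct.lie.gps:alg} and then the converse. The backbone throughout is \Cref{th:ft.iff.ext.cpct}. For a finite-type locally trivial subhomogeneous $C^*$ bundle$_{\bU}$ $\cA\xrightarrowdbl{}X$, local triviality makes $\cA$ (a finite direct sum of) associated bundles $\cP\times_{\bG}A$. Here $A=\bigoplus_k M_{n_k}$ is the fiber, and $\bG$ is its (compact Lie) automorphism group, with whatever $\bU$-action the local objects impose. \Cref{th:ft.iff.ext.cpct}\Cref{item:th:ft.iff.ext.cpct:univ.cpct} then extends $\cA$ across $X\lhook\joinrel\to\beta_{\bU}X$, so we may work over a compact Hausdorff base.

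\textbf{\Cref{item:th:which.cpct.lie.gps:vec} $\Rightarrow$ \Cref{item:th:which.cpct.lie.gps:alg}.}
\begin{itemize}[wide]
\item For the tensor statement \Cref{item:th:which.cpct.lie.gps:alg:tens}, first let $\cA$ be a finite-type matrix bundle. Over the compact base $\beta_{\bU}X$ we want a matrix bundle$_{\bU}$ $\cB$ with $\cA\otimes\cB$ neutral. The plan is to write $\cA$ locally as $\mathrm{End}(\cE)$. Globally $\cA$ need not be of this form; the obstruction is a Dixmier--Douady class. We handle this by tensoring with $\ol{\cA}\cong\cA^{op}$: since $\cA\otimes\cA^{op}\cong\mathrm{End}(\cA)$, it is the endomorphism bundle of the vector bundle$_{\bU}$ $\cA$ itself.
\item Next apply \Cref{item:th:which.cpct.lie.gps:vec} to the vector bundle $\cE=\cA$ to get $\cF\ne 0$ with $\cA\otimes\cF\cong V_{\beta_{\bU}X}$ neutral.
\item Then $\cB:=\cA^{op}\otimes\mathrm{End}(\cF)$ works, because
  \begin{equation*}
    \cA\otimes\cB\cong\mathrm{End}(\cA\otimes\cF)\cong\mathrm{End}(V)_{\beta_{\bU}X}.
  \end{equation*}
\item Conversely, if $\cA\otimes\cB$ is neutral, then $\cA\lhook\joinrel\to\cA\otimes 1\subseteq\cA\otimes\cB$ is a unital embedding into a neutral bundle; this gives \Cref{item:th:which.cpct.lie.gps:alg:tens} $\Rightarrow$ \Cref{item:th:which.cpct.lie.gps:alg:emb}.
\item For subhomogeneous $\cA$, embed $\cA$ unitally into the matrix bundle$_{\bU}$ $\mathrm{End}(\cA)$ via left multiplication. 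That bundle is finite-type because its principal bundle is induced along $\mathrm{Aut}(A)\to U(A)$ (\Cref{th:ft.iff.ext.cpct}\Cref{item:th:ft.iff.ext.cpct:all.ind}). Now apply the matrix case, which gives \Cref{item:th:which.cpct.lie.gps:alg:ft} $\Rightarrow$ \Cref{item:th:which.cpct.lie.gps:alg:emb}.
\item For \Cref{item:th:which.cpct.lie.gps:alg:emb} $\Rightarrow$ \Cref{item:th:which.cpct.lie.gps:alg:ft}, a unital embedding $\cA\subseteq M(V)_X$ is a reduction of structure group. The induced bundle $\cP\times_{\bG}\bK$, with $\bK$ the unitary group of $V$ (or the stabilizer of the image of $A$), is neutral and hence finite-type. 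Concretely, we would pull $\cA$ back from a map $X\to$ (compact space of unital subalgebras of $M(V)$ isomorphic to $A$). That space is a compact $\bU$-space (a closed orbit union in a Grassmannian), so \Cref{th:ft.iff.ext.cpct}\Cref{item:th:ft.iff.ext.cpct:cpct} gives finite type.
\end{itemize}

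\textbf{\Cref{item:th:which.cpct.lie.gps:alg} $\Rightarrow$ \Cref{item:th:which.cpct.lie.gps:vec}.} Let $\cE$ be a vector bundle$_{\bU}$ over a compact base $X$. Then $\cA:=\mathrm{End}(\cE)$ is a finite-type matrix bundle$_{\bU}$, by compactness. By \Cref{item:th:which.cpct.lie.gps:alg:tens} there is $\cB$ with $\mathrm{End}(\cE)\otimes\cB\cong M(V)_X$ neutral. We need to lift $\cB$ to $\mathrm{End}(\cF)$ with $\cE\otimes\cF\cong V'_X$.

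\textbf{Main obstacle.} This lifting step is where we expect the difficulty. The neutral algebra $M(V)_X$ is $\mathrm{End}(V_X)$, but the identification $\mathrm{End}(\cE\otimes\cF)\cong\mathrm{End}(V_X)$ only gives $\cE\otimes\cF\cong V_X\otimes\cL$ for a line bundle$_{\bU}$ $\cL$ (equivariantly, possibly twisted by a projective $\bU$-representation). The construction of $\cF$ itself is the same kind of problem: we want $\cB\cong\mathrm{End}(\cF)$ for a genuine vector bundle$_{\bU}$, and a priori $\cB$ is only $\mathrm{End}$ of a twisted one.

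The plan to get around this is to avoid $\cB$ entirely. Instead we take $\cF:=\cE^*\otimes\mathcal{H}$, where $\mathcal{H}$ is the canonical module. Precisely, consider $\mathcal{M}:=\mathrm{Hom}_{\cA\otimes\cB}(\cE\otimes\cB,\ V_X)$, the module of homomorphisms for the $\cA\otimes\cB\cong M(V)_X$-actions. This is a vector bundle$_{\bU}$, because Morita equivalence over each fiber is canonical and $\bU$-equivariant. It satisfies $\cE\otimes\mathcal{M}\cong V_X$ up to tensoring by the multiplicity space, which is one-dimensional here, and we would check that it is canonically trivial.

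If that canonical-triviality check fails, the fallback is to replace $V$ by $V\otimes\ol V$, which absorbs any residual line bundle: $\cL\otimes\ol{\cL}$ is trivial, so tensoring once more by $\ol{\cE}\otimes\ol{\mathcal{M}}$ lands in a neutral bundle. In either case we obtain a non-zero $\cF$ with $\cE\otimes\cF$ neutral, which is \Cref{item:th:which.cpct.lie.gps:vec}.
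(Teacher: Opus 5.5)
\textbf{The gap is in (a)$\Rightarrow$(b).} The ``main obstacle'' you name does not need to be overcome, and your workaround is not a proof as written.

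First, the module $V_X$ need not exist. A neutral matrix bundle$_{\bU}$ is $X\times M_N$ with $\bU$ acting on $M_N$ by $*$-automorphisms, i.e.\ through a homomorphism $\bU\to PU(N)$. Such a homomorphism need not lift to $U(N)$: already $SO(3)\cong PU(2)$ acting on $M_2$ does not. So $\mathcal{M}=\mathrm{Hom}_{\cA\otimes\cB}(\cE\otimes\cB,V_X)$ is in general not a bundle$_{\bU}$ at all.

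Second, even when $V$ is a genuine representation, the decisive point is only deferred. That point is that $e\otimes\phi\mapsto\phi(e\otimes 1)$ identifies $\cE\otimes\mathcal{M}$ with $V_X$; it does hold, but you leave it as ``we would check''. The fallback is only gestured at, and it would itself require working with projectively twisted bundles.

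The missing observation is that you never need $\cB\cong\mathrm{End}(\cF)$; just forget the algebra structure. The automorphism action of $\bU$ on $M_N$ is in particular a linear representation, so a neutral matrix bundle is also a neutral vector bundle$_{\bU}$. As vector bundles$_{\bU}$ one has $\mathrm{End}(\cE)\otimes\cB\cong\cE\otimes\left(\cE^*\otimes\cB\right)$. Thus $\cF:=\cE^*\otimes\cB$, which is non-zero, neutralizes $\cE$. This is the paper's entire argument for this direction; no Morita theory, residual line bundles or projective twists enter.

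\textbf{Your (b)$\Rightarrow$(a) is fine} and only a mild variant of the paper's. The paper works over the compact base supplied by \Cref{th:ft.iff.ext.cpct} and proceeds as follows:
\begin{itemize}
\item it applies (b) to the vector bundle underlying $\cA$ to get $\cA\otimes\cF$ neutral;
\item it embeds $\cA$ by left multiplication into $\mathrm{End}(\cA\otimes\cF)$ and takes $\cB$ to be the commutant of $\cA$ there;
\item for the converse it cites inheritance of finite type by subalgebra bundles.
\end{itemize}
You instead obtain $\cB=\cA^{op}\otimes\mathrm{End}(\cF)$ directly via $\cA\otimes\cA^{op}\cong\mathrm{End}(\cA)$. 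You reduce the subhomogeneous case to the matrix case through $\cA\subseteq\mathrm{End}(\cA)$. You prove (ii)$\Rightarrow$(i) by pulling back from the compact $\bU$-space of unital subalgebras of $M(V)$. That concrete pull-back is what carries the step: your preceding ``reduction of structure group'' sentence has the arrow backwards, since $\mathrm{Aut}(A)$ does not map into $U(V)$.
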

\begin{proof}
  \begin{enumerate}[label={},wide]
  \item\textbf{\Cref{item:th:which.cpct.lie.gps:alg} $\Rightarrow$ \Cref{item:th:which.cpct.lie.gps:vec}} If compact-base $\cE$ has $\left(\cE\otimes \cE^*\right)\otimes \cB$ neutral as a matrix bundle$_{\bU}$ (so by necessity also as a plain vector bundle$_{\bU}$), then tensoring with $\cE^*\otimes \cB$ neutralizes $\cE$.

  \item\textbf{\Cref{item:th:which.cpct.lie.gps:vec} $\Rightarrow$ \Cref{item:th:which.cpct.lie.gps:alg}} Given the stronger version of \Cref{item:th:which.cpct.lie.gps:alg}\Cref{item:th:which.cpct.lie.gps:alg:emb} in the form of an embedding $\cA\lhook\joinrel\to \cM$ for a neutral matrix bundle $\cM$, simply take for $\cB$ the bundle $\cA'\le \cM$ of \emph{commutants}:
    \begin{equation*}
      \cB_x
      :=
      \cA'_x
      :=
      \left\{b\in \cM_x\ :\ ba=ab,\ \forall a\in \cA_x\right\}. 
    \end{equation*}
    We can thus focus on \Cref{item:th:which.cpct.lie.gps:alg}\Cref{item:th:which.cpct.lie.gps:alg:ft} $\Leftrightarrow$ \Cref{item:th:which.cpct.lie.gps:alg}\Cref{item:th:which.cpct.lie.gps:alg:emb} alone, with $C^*$ and matrix-bundle embeddability (the two versions of \Cref{item:th:which.cpct.lie.gps:alg}\Cref{item:th:which.cpct.lie.gps:alg:emb}) equivalent \cite[Theorem 2.4, (b) $\Leftrightarrow$ (c)]{2605.10752v1}.

    Being finite-type is inherited by subalgebra bundles \cite[Theorem 2.4 (e) $\Rightarrow$ (a)]{2605.10752v1}, so it is \Cref{item:th:which.cpct.lie.gps:alg}\Cref{item:th:which.cpct.lie.gps:alg:ft} $\Rightarrow$ \Cref{item:th:which.cpct.lie.gps:alg}\Cref{item:th:which.cpct.lie.gps:alg:emb} that carries the substance of the claims. As \Cref{th:ft.iff.ext.cpct} (applied to principal bundles over unitary or projective unitary groups) furthermore effects the transition from $T_4$ to compact Hausdorff, we can assume the base compact for the duration.
    
    Assuming $\cA$ of finite type even only as a vector bundle$_{\bU}$ (a condition equivalent to the hypothesis by the selfsame \cite[Theorem 2.4 (e) $\Rightarrow$ (a)]{2605.10752v1} and in any case formally weaker), $\cA$ operates on itself as a vector bundle by left multiplication. It then also operates on a \emph{neutral} vector bundle$_{\bU}$ of the form
    \begin{equation*}
      \cG:= \cA\otimes \cF
      ,\quad
      \cF\text{ a vector bundle$_{\bU}$ (as we are assuming \Cref{item:th:which.cpct.lie.gps:vec})},
    \end{equation*}
    so embeds unitally in the neutral matrix bundle $\cE nd~\cG:=\cG\otimes \cG^*$.
  \end{enumerate}
\end{proof}

Isolating the properties marked out by \Cref{th:which.cpct.lie.gps} will provide streamlined language. 

\begin{definition}\label{def:ntrl}
  \begin{enumerate}[(1),wide]
  \item\label{item:def:ntrl:ntrl} An action $\alpha:\bU\circlearrowright X$ of a compact (virtually always assumed Lie) group on a normal space is \emph{neutral} if all finite-type bundles$_{\bU}$ on $X$ can be neutralized by tensoring with other (non-zero) such. Equivalently (\Cref{th:ft.iff.ext.cpct}\Cref{item:th:ft.iff.ext.cpct:ft} $\Leftrightarrow$ \Cref{item:th:ft.iff.ext.cpct:univ.cpct}), the induced action $\bU\circlearrowright \beta_{\bU}X$ on the universal equivariant compactification is neutral. 
    
  \item\label{item:def:ntrl:loc.ntrl} $\alpha$ is \emph{locally neutral} if $X$ admits a finite cover by $\bU$-invariant open sets over which the action is neutral in the sense of \Cref{item:def:ntrl:ntrl}.

    The notions and language effectively quantify neutrality universally over bundles, having fixed the underlying action. One can also quantify over actions.

  \item\label{item:def:ntrl:gp.ntrl} $\bU$ itself is (\emph{locally}) \emph{neutral} if all of its actions on normal (equivalently, compact Hausdorff) spaces are. More generally, $\bU$ is \emph{(locally) $\cX$-neutral} for a class $\cX$ of spaces if its actions on spaces in $\cX$ are all (respectively locally) neutral. The same language applies if $\cX$ is a class of actions rather than spaces. 
  \end{enumerate}
\end{definition}

\Cref{pr:loc.ntrl} characterizes local neutrality.

\pf{pr:loc.ntrl}
\begin{pr:loc.ntrl}
  \begin{enumerate}[label={},wide]
  \item\textbf{\Cref{item:pr:loc.ntrl:loc.ntrl} $\Rightarrow$ \Cref{item:pr:loc.ntrl:mfld.ntrl} $\Rightarrow$ \Cref{item:pr:loc.ntrl:orb.ntrl}} are immediate on purely formal grounds.     
    
  \item\textbf{\Cref{item:pr:loc.ntrl:orb.ntrl} $\Rightarrow$ \Cref{item:pr:loc.ntrl:mfld.ntrl}:} This follows from the fact \cite[Theorem I.5.6]{td_transf-gp} that arbitrary orbits in compact smooth $\bU$-manifolds have \emph{tube} neighborhoods that deformation-retract equivariantly onto said orbits. 
    
  \item\textbf{\Cref{item:pr:loc.ntrl:orb.ntrl} $\Leftrightarrow$ \Cref{item:pr:loc.ntrl:rep.ext}} follows from the identification \cite[p.130, (c)]{seg} between vector bundles$_{\bU}$ on $\bU/\bH$ and $\bH$-representations. 

  \item\textbf{\Cref{item:pr:loc.ntrl:mfld.ntrl} $\Rightarrow$ \Cref{item:pr:loc.ntrl:loc.ntrl}:} Finite-type vector bundles$_{\bU}$ pull back along maps to compact smooth $\bU$-manifolds (\Cref{res:mtrz.as.vect}\Cref{item:res:mtrz.as.vect:plb.cpct} + \Cref{th:ft.iff.ext.cpct}) and being neutralized by tensoring survives pullbacks.  \qedhere
  \end{enumerate}  
\end{pr:loc.ntrl}

\Cref{th:which.cpct.lie.gps} can now be regarded as a global version of \Cref{th:which.cpct.lie.gps.loc}, whose proof we omit given the earlier argument's immediate adaptability.

\Cref{pr:loc.ntrl.fin} gathers some observations on local neutrality. Given the ubiquity of neutral actions on homogeneous spaces, we indicate $\bU$'s acting neutrally on $\bU/\bH$ by $\bU\overset{\circlearrowright}{/}\bH$. Throughout the sequel, $\Ad_{\bullet}$ denotes conjugation actions (e.g. of groups on normal subgroups, or on isomorphism classes of representations thereof). 

\begin{proposition}\label{pr:loc.ntrl.fin}
  \begin{enumerate}[(1),wide]
  \item\label{item:pr:loc.ntrl.fin:trnstv} For a chain of closed $\bK\le \bH\le \bU$ Lie subgroups in a compact Lie group we have
    \begin{equation*}
      \bU\overset{\circlearrowright}{/}\bH
      \ \wedge \ 
      \bH\overset{\circlearrowright}{/}\bK
      \xRightarrow{\quad}
      \bU\overset{\circlearrowright}{/}\bK
      \quad\text{and}\quad
      \bU\overset{\circlearrowright}{/}\bK
      \xRightarrow{\quad}
      \bH\overset{\circlearrowright}{/}\bK.
    \end{equation*}
    
  \item\label{item:pr:loc.ntrl.fin:fin.ind} Compact Lie groups act neutrally on their finite homogeneous spaces.

  \item\label{item:pr:loc.ntrl.fin:fin.gp} In particular, finite groups are locally neutral. 
  \end{enumerate}
\end{proposition}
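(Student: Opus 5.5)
We will work throughout with the representation-theoretic reformulation of \Cref{pr:loc.ntrl}\Cref{item:pr:loc.ntrl:rep.ext}: $\bU\overset{\circlearrowright}{/}\bH$ means that for every $\bH$-representation $V$ there is a non-zero $\bH$-representation $W$ with $V\otimes W\cong \rho|_{\bH}$ for some $\bU$-representation $\rho$. This identification comes from the correspondence between vector bundles$_{\bU}$ on $\bU/\bH$ and $\bH$-representations, with neutral bundles corresponding to restrictions from $\bU$.

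For \Cref{item:pr:loc.ntrl.fin:trnstv}, the second implication is immediate. Given $V\in\Rep(\bK)$, choose $W$ with $V\otimes W\cong \rho|_{\bK}$ for a $\bU$-representation $\rho$; then $\rho|_{\bH}$ witnesses $\bH\overset{\circlearrowright}{/}\bK$. For the first implication, start with $V\in \Rep(\bK)$. Use $\bH\overset{\circlearrowright}{/}\bK$ to get $W$ and $\sigma\in\Rep(\bH)$ with $V\otimes W\cong\sigma|_{\bK}$. Next use $\bU\overset{\circlearrowright}{/}\bH$ to get $W'\in\Rep(\bH)$ and $\rho\in\Rep(\bU)$ with $\sigma\otimes W'\cong \rho|_{\bH}$. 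Then
\begin{equation*}
  V\otimes\left(W\otimes W'|_{\bK}\right)\cong \left(\sigma\otimes W'\right)|_{\bK}\cong \rho|_{\bK},
\end{equation*}
and $W\otimes W'|_{\bK}$ is non-zero. This is pure bookkeeping with the monoidality of restriction.

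For \Cref{item:pr:loc.ntrl.fin:fin.ind}, take $\bH\le\bU$ of finite index $d$ and $V\in\Rep(\bH)$ of dimension $n$. The natural candidate is $\rho:=\mathrm{Ind}_{\bH}^{\bU}V$. By Mackey, $\rho|_{\bH}$ contains $V$ as a summand, but it need not be of the form $V\otimes W$, so something more is needed. The cleaner plan is a tensor-induction (multiplicative induction) argument. The tensor-induced representation $\bigotimes\mathrm{Ind}_{\bH}^{\bU}V$ lives on $\bigotimes_{g\bH\in\bU/\bH} gV$. Restricted to $\bH$, it is the tensor product, over the $\bH$-orbits on $\bU/\bH$ (the double cosets $\bH g\bH$), of tensor inductions from $\bH\cap g\bH g^{-1}$ of conjugates of $V$. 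The trivial double coset contributes exactly the factor $V$. Hence $\rho|_{\bH}\cong V\otimes W$, where $W$ is the tensor product of the contributions of the non-trivial double cosets, which is non-zero since $V\ne 0$. This requires only finite index, so the compact Lie setting poses no issue: the construction is algebraic and continuity is clear. It amounts to the standard Evens norm/tensor-induction construction, and the restriction formula is its Mackey formula.

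The main obstacle is verifying that Mackey decomposition for tensor induction carefully, in particular that the factor indexed by the identity coset is precisely $V$ as an $\bH$-representation, and that the remaining factors assemble into an honest $\bH$-representation $W$. If this bookkeeping proves awkward, the fallback is the following. Write $\bU$ acting on the finite set $\bU/\bH$. Choose coset representatives $g_1=1,\dots,g_d$. Define $\rho$ on $\bigotimes_i V$ by $u\cdot(v_1\otimes\cdots\otimes v_d)=\bigotimes_i h_{i}v_{\pi^{-1}(i)}$, where $ug_j=g_{\pi(j)}h_{j}$ with $h_j\in\bH$. Then check directly that $\bH$ fixes the coset $g_1\bH$ and acts on the first tensor factor through $V$.

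Finally, \Cref{item:pr:loc.ntrl.fin:fin.gp} follows from \Cref{item:pr:loc.ntrl.fin:fin.ind}. For a finite group, every homogeneous space is finite, so condition \Cref{item:pr:loc.ntrl:orb.ntrl} of \Cref{pr:loc.ntrl} holds, and \Cref{pr:loc.ntrl} then gives local neutrality.
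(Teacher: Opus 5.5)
Your proof is correct, and for item (2) it takes a genuinely different route from the paper. The paper first handles normal $\bH\trianglelefteq\bU$ via Clifford's theorem. It tensors $V$ with all of its $\bU$-conjugates to get a conjugation-invariant class. It then tensors further with a trivial representation whose dimension absorbs the Clifford multiplicities $n_\rho$, so that the result is a sum of restrictions of $\bU$-representations. The general finite-index case is reduced to a normal finite-index $\bK\trianglelefteq\bU$ inside $\bH$ through Mackey's additive restriction-induction formula.

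Your tensor induction $\rho=\mathrm{Ten}^{\bU}_{\bH}V$ avoids both steps. For normal $\bH$, its restriction is exactly $\bigotimes_{g\in\bU/\bH}{}^gV$, so it realizes the paper's ``tensor with all conjugates'' step directly as a restriction from $\bU$, with no Clifford theory or multiplicity absorption needed. For arbitrary finite-index $\bH$, it produces $V$ as a literal tensor factor, with an explicit complement $W$ of dimension $(\dim V)^{[\bU:\bH]-1}$.

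Your fallback check is all you need; the tensor-induction Mackey formula is not required. For $h\in\bH$ one has $hg_1=g_1h$, so $h$ fixes slot $1$, acts there by $V(h)$, and permutes the remaining slots. Hence $\rho(h)=V(h)\otimes W(h)$. The map $W$ is multiplicative because $A\otimes B=A\otimes B'$ with $A\neq 0$ forces $B=B'$. Continuity is automatic because the closed finite-index subgroup $\bH$ is open in $\bU$.

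One small correction to that formula. With $ug_j=g_{\pi(j)}h_j$, the entry in slot $i$ must be $h_{\pi^{-1}(i)}v_{\pi^{-1}(i)}$, not $h_i v_{\pi^{-1}(i)}$. As you wrote it, the cocycle relation $h_j(uu')=h_{\pi_{u'}(j)}(u)\,h_j(u')$ does not make the map a homomorphism.

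What each route buys: the paper's stays within additive induction, restriction and Clifford theory, the same toolkit it reuses for the finite-index permanence results later on. Yours is shorter, self-contained, uniform in $\bH$, and gives an explicit witness. Items (1) and (3) match the paper's formal arguments.
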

\begin{proof}
  \Cref{item:pr:loc.ntrl.fin:trnstv} is a simple unpacking of the definitions and \Cref{item:pr:loc.ntrl.fin:fin.ind} $\Rightarrow$ \Cref{item:pr:loc.ntrl.fin:fin.gp} is tautological; we thus only address \Cref{item:pr:loc.ntrl.fin:fin.ind}.
  
  In the representation-theoretic form of \Cref{pr:loc.ntrl}'s \Cref{item:pr:loc.ntrl:rep.ext}, the claim is that representations of finite-index subgroups $\bH\le \bU$ can be tensored with others such, non-zero, so as to produce $\bU$-representations. Observe first that this does hold for \emph{normal} $\bH\trianglelefteq \bU$. Indeed, the $\bH$-representations restricted from $\bU$ are sums of
  \begin{equation*}
    \left(\bigoplus \cO_{\rho}\right)^{\oplus n_{\rho}}
    ,\quad
    \text{some fixed $\rho$-dependent }n_{\rho}\in \bZ_{\ge 0},
  \end{equation*}
  where $\cO_{\rho}$ is the orbit in $\cat{Irr}(\bH)$ of the irreducible $\bH$-representation $\rho$ under $\bU$-conjugation: this is \emph{Clifford's theorem} \cite[Theorem 6.2]{is_char_1976} in its compact-group form (also implicit in \cite[Theorem 3.11 and preceding discussion]{mack-unit}, say). Now, an arbitrary $\bH$-representation can first be tensored with all of its $\bU$-conjugates, and then further tensored with a trivial representation whose dimension is divisible by sufficiently many $n_{\rho}$.

  For the general case, consider a finite-index subgroup $\bH\le \bU$ in a compact Lie group together with a finite-index normal subgroup $\bK\trianglelefteq \bU$ contained in $\bH$ (e.g. $\bK:=\bigcap_{g\in \bU} g \bH g^{-1}$, automatically of finite index, or simply the identity connected component $\bK:=\bU_0$). If $\rho\in \Rep(\bH)$ is such that $\Ad_g \left(\rho|_{\bK}\right)$, $g\in \bU$ are all mutually isomorphic, some multiples of
  \begin{equation}\label{eq:2inds}
    \rho\otimes \mathrm{Ind}^{\bH}_{\bK}\mathbf{1}
    \cong
    \mathrm{Ind}^{\bH}_{\bK}\rho|_{\bK}
    \quad\text{and}\quad
    \left(\mathrm{Ind}^{\bU}_{\bK}\rho|_{\bK}\right)|_{\bH}
    \cong
    \bigoplus_{s\in \bK\backslash \bU/\bH=\bU/\bH}\mathrm{Ind}_{\bK}^{\bH}\Ad_s\rho|_{\bK}
  \end{equation}
  will coincide, with the second isomorphism being an instance of Mackey's restriction-induction \cite[Theorem 7.1]{mack-ind-1} (\cite[Proposition 22]{serre_rep_1977} for finite groups). Arbitrary $\rho$ can first be replaced by
  \begin{equation*}
    \bigotimes_{s\in \bU/\bK}\mathrm{Ind}_{\bK}^{\bH}\Ad_s \rho|_{\bK},
  \end{equation*}
  (with $\rho$ as a tensor factor by \Cref{eq:2inds}'s left-hand side) ensuring the aforementioned invariance property.
\end{proof}

\begin{remark}\label{re:fin.loc.ntrl}
  The conclusion of \Cref{pr:loc.ntrl.fin}\Cref{item:pr:loc.ntrl.fin:fin.gp} is easily obtainable directly from the absorption property of a finite group's regular representation $\rho_{H}$: for all $\rho\in \Rep(\bH)$, $\rho_H\otimes \rho\cong \rho_H^{\oplus \dim \rho}$ (e.g. by its character's vanishing \cite[Proposition 5]{serre_rep_1977} on all non-trivial elements). For $\bH\le \bU$ (both finite), then, $\left(\rho_{\bU}|_{\bH}\right)\otimes \rho\cong \left(\rho_{\bU}|_{\bH}\right)^{\oplus [\bU:\bH] \cdot \dim \rho}$ for any $\rho\in\Rep(\bH)$. 
\end{remark}

Further partial results on local neutrality include the following.

\begin{proposition}\label{pr:fin.tor.loc.ntrl}
  \begin{enumerate}[(1),wide]
  \item\label{item:pr:fin.tor.loc.ntrl:ab} Abelian compact Lie groups are locally neutral. 

  \item\label{item:pr:fin.tor.loc.ntrl:subtor.fin} Compact Lie groups $\bU$ act neutrally on homogeneous spaces $\bU/\bH$ with $\bH$ either finite or \emph{sub-toral}, i.e. contained in a torus. 
  \end{enumerate}  
\end{proposition}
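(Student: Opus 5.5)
By \Cref{pr:loc.ntrl}, local neutrality of $\bU$ is equivalent to condition \Cref{item:pr:loc.ntrl:rep.ext}, i.e. $\bU\overset{\circlearrowright}{/}\bH$ for every closed $\bH\le\bU$. So item \Cref{item:pr:fin.tor.loc.ntrl:ab} follows from item \Cref{item:pr:fin.tor.loc.ntrl:subtor.fin} together with the permanence results. For abelian compact Lie $\bU$, the identity component $\bU_0$ is a torus. Hence every closed $\bH\le \bU$ has $\bH\cap\bU_0$ sub-toral, and $\bU_0$ has finite index in $\bU$. By \Cref{th:fin.ind.dscnt}\Cref{item:th:fin.ind.dscnt:fin.ind.r} (with $\bU_0$ in the role of the finite-index subgroup), $\bU\overset{\circlearrowright}{/}\bH$ reduces to $\bU_0\overset{\circlearrowright}{/}(\bH\cap\bU_0)$. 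That is the torus case, where the claim is simply that characters of a closed subgroup of a torus extend to the torus.

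If one prefers not to lean on \Cref{th:fin.ind.dscnt} (it is stated in the introduction, but its proof may come later), the abelian case can be done by hand instead. Irreducible representations of $\bH$ are characters $\chi$. Pontryagin duality makes $\widehat{\bU}\to\widehat{\bH}$ surjective, so $\chi$ itself extends, and we may take $W=\mathbf 1$.

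For item \Cref{item:pr:fin.tor.loc.ntrl:subtor.fin}, the finite case is settled by a regular-representation argument, as in \Cref{re:fin.loc.ntrl}. Fix a faithful representation $\bU\le U(n)$. For finite $\bH$ I would invoke the standard fact that some tensor power or polynomial in a faithful representation restricts to contain enough of $\bH$'s characters. More concretely, I would use the Peter--Weyl/Frobenius fact that $L^2(\bU)|_{\bH}$ contains every $\bH$-irrep. A finite-dimensional truncation $R$ of $\bigoplus_{\pi}\pi$ over finitely many $\bU$-irreps then restricts to $\bH$ with full support on $\cat{Irr}(\bH)$. The key step is to show that, after adding suitable $\bU$-representations, some $\bU$-representation $R$ restricts to a multiple $\rho_{\bH}^{\oplus m}$ of the regular representation. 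Once that holds, absorption $\rho_{\bH}\otimes\rho\cong\rho_{\bH}^{\oplus\dim\rho}$ gives $W:=R|_{\bH}$, and we are done.

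To obtain that multiple of the regular representation, I would take $R=\mathrm{Ind}_{\bH}^{\bU}\mathbf 1$-approximations; more precisely, I would use $\chi_R$ vanishing on $\bH\setminus\{1\}$. I would try to construct it as a product $\prod_{h}(\chi_V-\chi_V(h))$ made into a virtual character and then corrected to a genuine one. This is the main obstacle, and I expect to need a cleaner route. The alternative I would try is the tube/Segal approach: $\bU/\bH$ with $\bH$ finite is a free $\bU/\bH$-quotient situation, and bundles pulled back along $\bU/\bH\to\bU/\bT$-type maps can be handled as in the sub-toral case.

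For sub-toral $\bH\le\bT$, I would use transitivity, \Cref{pr:loc.ntrl.fin}\Cref{item:pr:loc.ntrl.fin:trnstv}. First, $\bT\overset{\circlearrowright}{/}\bH$ holds by the character-extension argument above. Next, I need $\bU\overset{\circlearrowright}{/}\bT$ for a maximal torus $\bT\ge\bH$. Given a character $\lambda$ of $\bT$, choose a $\bU_0$-irrep whose weights include $\lambda$ up to Weyl conjugacy. The restriction of a suitable $\bU$-representation to $\bT$ is a Weyl-invariant sum of characters. Tensoring $\lambda$ with $W:=\bigoplus_{w\neq 1}w\lambda$, plus further terms chosen so that the total weight multiset becomes that of an honest $\bU$-representation, should suffice. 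The hard point is realizing a Weyl-orbit-invariant weight multiset by an actual $\bU$-representation. I would handle it by tensoring with $\bigotimes_{w\in W(\bU,\bT)}w\lambda$-type products and using that Weyl-invariant characters of $\bT$ lie in the image of $R(\bU)$ up to multiples (again passing through $\bU_0$ and \Cref{pr:loc.ntrl.fin}\Cref{item:pr:loc.ntrl.fin:fin.ind} for the component group).
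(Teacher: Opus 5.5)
Your item (1) is fine: the direct Pontryagin-duality argument (characters of a closed subgroup extend, so every $\bH$-representation extends and $W=\mathbf 1$ works) is exactly the paper's. Item (2), however, is not established in either subcase.

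In the sub-toral case your reduction to $\bU_0\overset{\circlearrowright}{/}\bT$ via \Cref{pr:loc.ntrl.fin} matches the paper, but you only treat one-dimensional $\lambda\in\widehat{\bT}$. For those, $\bigotimes_{w}w\lambda$ is a single Weyl-fixed weight and does extend to a character of $\bU_0$. Neutrality of $\bU_0/\bT$, however, requires handling every $\bT$-representation $\rho$, including reducible ones, and that is where the difficulty lies. For reducible $\rho$ the Weyl-symmetrized product $\wt{\rho}=\bigotimes_w w\rho$ is only known to lie in $R(\bT)^W\cong R(\bU_0)$. That makes it the restriction of a \emph{virtual} character, and neither it nor any positive multiple of it need come from a genuine representation. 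For example, take $\bU=SU(2)$ and $\rho=z^2\oplus z^{-2}$, which is already Weyl-invariant and equals $(V_2-V_0)|_{\bT}$, with $V_n$ the $(n+1)$-dimensional irreducible. No one-dimensional $W$, hence no product of characters, makes $\rho\otimes W$ extendable, whereas $W=V_1|_{\bT}$ gives $V_3|_{\bT}$. So your claim that ``Weyl-invariant characters lie in the image of $R(\bU)$ up to multiples'' is true but insufficient if it means one-dimensional characters, and false if it means characters of representations. The missing ingredient is an eventual-positivity statement, which the paper imports as \cite[Corollaire 2]{MR838399}: some $\psi\in\Rep(\bU)$ makes $\wt{\rho}\otimes\psi|_{\bT}$ extendable.

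For finite $\bH$ you correctly reduce, by absorption, to producing a genuine $\theta\in\Rep(\bU)$ with $\theta|_{\bH}$ a multiple of $\rho_{\bH}$. You then leave this open, and it is the entire content of that subcase. None of your three routes closes it:
\begin{enumerate}[(i),wide]
\item The product $\prod_{h\ne 1}(\chi_V-\chi_V(h))$ gives at best a virtual character, and ``correcting it to a genuine one'' is again the positivity problem.
\item The Peter--Weyl truncation gives full support on $\cat{Irr}(\bH)$, not an exact multiple of $\rho_{\bH}$.
\item The tube/Segal alternative through $\bU/\bH\to\bU/\bT$ presupposes $\bH\le\bT$. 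This fails for finite non-abelian $\bH$ such as $Q_8\le SU(2)$, and for $\bH\not\le\bU_0$.
\end{enumerate}
The paper closes this step by citing the proof of \cite[Theorem 9]{MR893156}, which supplies a genuine $\bU$-representation whose character vanishes on $\bH\setminus\{1\}$. Since you were willing to use \Cref{th:fin.ind.dscnt} in item (1), its part \Cref{item:th:fin.ind.dscnt:fin.ind.l} with $\bK=\{1\}$ would also yield $\bU\overset{\circlearrowright}{/}\bH$ for finite $\bH$. However, the proof of that part rests on the same input from \cite{MR893156}, so this only relocates the missing ingredient.
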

\begin{proof}
  \begin{enumerate}[label={},wide]
  \item\textbf{$\Cref{item:pr:fin.tor.loc.ntrl:ab}$} Representations of compact abelian groups simply extend from subgroups: \emph{Pontryagin duality} \cite[\S 1.7.3]{rud_lc} turns the claim into that of $\bZ^d$-domain morphisms in the category $\cat{Ab}$ of (discrete) abelian groups lifting along epimorphisms, which follows from $\bZ^d$'s projectivity in $\cat{Ab}$.
    
  \item\textbf{$\Cref{item:pr:fin.tor.loc.ntrl:subtor.fin}$: sub-toral.} A repeated application of \Cref{pr:loc.ntrl.fin}\Cref{item:pr:loc.ntrl.fin:trnstv} to the two contiguous 3-term chains in
      \begin{equation*}
        \bH\le \bT\overset{\text{maximal torus}}{\le} \bU_0\le \bU
        ,\quad
        \bU_0:=\text{identity component}
      \end{equation*}
      reduces the problem to $\bT\overset{\circlearrowright}{/}\bH$ and $\bU_0\overset{\circlearrowright}{/}\bT$, with the first settled by \Cref{item:pr:fin.tor.loc.ntrl:ab}. It thus remains to handle maximal tori in compact connected $\bU$. Now, for any $\bT$-representation $\rho$
      \begin{itemize}[wide]

      \item the product $\wt{\rho}$ of $\rho$'s conjugates under the \emph{Weyl group} $W=W(\bT,\bU)$ \cite[Definition 4.1.3]{btd_lie_1995} is a $\bT$-representation restricted from a \emph{virtual character} of $\bG$, per the usual identification $R(\bU)\cong R(\bT)^{W}$ \cite[Proposition VI.2.1]{btd_lie_1995} of representation rings;

      \item whence the extensibility of $\wt{\rho}\otimes \psi|_{\bT}$ to $\bU$ for some $\psi\in \Rep(\bU)$ by \cite[Corollaire 2]{MR838399}.
      \end{itemize}

    \item\textbf{$\Cref{item:pr:fin.tor.loc.ntrl:subtor.fin}$: finite.} The regular representation's absorbing property noted in \Cref{re:fin.loc.ntrl} reduces the problem to proving some multiple thereof extensible to $\bU$; this, in different language, is precisely what the proof of \cite[Theorem 9]{MR893156} delivers (for the $\bH$-character appearing there, vanishing on all non-trivial elements of the finite group $\bH$, is a rational multiple of the regular representation's character). 
  \end{enumerate}  
\end{proof}

\Cref{th:fin.ind.dscnt} is very much in the same spirit of examining permanence properties for neutral homogeneous actions. It will be convenient to have shorthand language/notation for describing representations with coinciding finite powers: $\rho^{\oplus m}\cong \rho'^{\oplus n}$ for some $m,n\in \bZ_{>0}$. In that case we call $\rho$ and $\rho'$ \emph{commensurable}, and write $\rho\sim \rho'$. 

\pf{th:fin.ind.dscnt}
\begin{th:fin.ind.dscnt}
  \Cref{item:th:fin.ind.dscnt:fin.ind.r} plainly implies \Cref{item:th:fin.ind.dscnt:fin.ind.r.glob}.
  \begin{enumerate}[label={},wide]
  \item\textbf{\Cref{item:th:fin.ind.dscnt:fin.ind.l}:} We have $\bH\overset{\circlearrowright}{/}\bK$ in any case by \Cref{pr:loc.ntrl.fin}\Cref{item:pr:loc.ntrl.fin:fin.ind}, hence also ($\Leftarrow$) by \Cref{pr:loc.ntrl.fin}\Cref{item:pr:loc.ntrl.fin:trnstv}. 

    For ($\Rightarrow$), we can assume $\bK\trianglelefteq \bH$ (normal) by the same device as that employed in the proof of \Cref{pr:loc.ntrl.fin}\Cref{item:pr:loc.ntrl.fin:fin.ind}: 
    \begin{equation*}
      \bK\overset{\circlearrowright}{/}\left(\wt{\bK}:=\bigcap_{g\in \bH}\Ad_g \bK\right)
      \xRightarrow{\quad\text{\Cref{pr:loc.ntrl.fin}\Cref{item:pr:loc.ntrl.fin:trnstv}}\quad}
      \bU\overset{\circlearrowright}{/}\wt{\bK},
    \end{equation*}
    so $\wt{\bK}\trianglelefteq \bH$ can be substituted for $\bK$.

    The restriction $\rho|_{\bK}$ of an arbitrary $\rho\in \Rep(\bH)$ by assumption admits a complementary factor $\psi\in \Rep(\bK)$ with $\psi\otimes \rho|_{\bK}\in \Rep(\bU)|_{\bK}$, so that
    \begin{equation}\label{eq:can.lift}
      \exists\left(\theta\in \Rep(\bU)\right)
      \left(
        \bigotimes_{g\in \bH/\bK}\left(\left(\Ad_g \psi\right)\otimes \rho|_{\bK}\right)
        =
        \left(\rho|_{\bK}\right)^{\otimes |\bH/\bK|} \otimes
        \bigotimes_{g\in \bH/\bK}\Ad_g \psi
        \cong
        \theta|_{\bK}
      \right).
    \end{equation}
    On the one hand
    \begin{equation*}
      \bigotimes_{g\in \bH/\bK}\Ad_g \psi
      \sim
      \mathrm{Ind}_{\bK}^{\bH}\psi
      \quad
      \text{\cite[Theorem 7.1]{mack-ind-1} again}
    \end{equation*}
    so after implementing whatever necessary positive-integer scaling and appropriate tensorand regrouping/absorption \Cref{eq:can.lift}'s conclusion streamlines to
    \begin{equation}\label{eq:resk3}
      \left(\rho|_{\bK}\right)
      \otimes
      \left(\rho'|_{\bK}\right)
      \cong
      \theta|_{\bK}
      ,\quad
      \begin{aligned}
        \rho,\rho'&\in \Rep(\bH)\\
        \theta&\in\Rep(\bU)
      \end{aligned}
      .
    \end{equation}
    Further substituting
    \begin{equation*}
      \rho
      \xmapsto{\quad}
      \rho\otimes \eta|_{\bH}
      ,\quad
      \theta
      \xmapsto{\quad}
      \theta\otimes \eta      
    \end{equation*}
    for some representation $\eta\in \Rep(\bU)$ whose character vanishes on the set difference $\bH-\bK$ (always achievable \cite[proof of Proposition 11]{MR893156}), \Cref{eq:resk3} will be valid with $\bH$-restrictions in place of $|_{\bK}$: $\rho\otimes\rho'\cong \theta|_{\bH}$, for their characters agree on both $\bK$ \Cref{eq:resk3} and $\bH-\bK$ (where they vanish). This is the desired conclusion.
    
  \item\textbf{\Cref{item:th:fin.ind.dscnt:fin.ind.r}:} \Cref{item:th:fin.ind.dscnt:fin.ind.l} applied to $\bK\cap \bH\le \bK\le \bU$ (of which the left-hand embedding is finite-index) first yields
    \begin{equation*}
      \bU \overset{\circlearrowright}{/} \bK
      \iff
      \bU \overset{\circlearrowright}{/} \left(\bK\cap \bH\right).
    \end{equation*}
    This is further equivalent to $\bH \overset{\circlearrowright}{/} \left(\bK\cap \bH\right)$ by \Cref{pr:loc.ntrl.fin}\Cref{item:pr:loc.ntrl.fin:fin.ind}, ensuring that $\bU\overset{\circlearrowright}{/}\bH$, together with \Cref{pr:loc.ntrl.fin}\Cref{item:pr:loc.ntrl.fin:trnstv}.  \qedhere
  \end{enumerate}
\end{th:fin.ind.dscnt}

We record a consequence of the foregoing discussion. 

\begin{corollary}\label{cor:vrt.ab}
  \begin{enumerate}[(1),wide]
  \item Compact Lie groups act neutrally on homogeneous spaces with virtually abelian isotropy.

  \item Virtually abelian compact Lie groups satisfy the mutually equivalent conditions of \Cref{th:which.cpct.lie.gps.loc}.
  \end{enumerate}
\end{corollary}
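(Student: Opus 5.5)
Both claims reduce to facts already in hand: part (1) to \Cref{pr:fin.tor.loc.ntrl}\Cref{item:pr:fin.tor.loc.ntrl:subtor.fin} together with \Cref{th:fin.ind.dscnt}\Cref{item:th:fin.ind.dscnt:fin.ind.l}, and part (2) to part (1) together with \Cref{pr:loc.ntrl}.

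For (1), let $\bH\le \bU$ be closed and virtually abelian, and choose a finite-index abelian subgroup $\bA\le\bH$. We may take $\bA$ closed, since the closure of an abelian subgroup is still abelian and still of finite index. By \Cref{th:fin.ind.dscnt}\Cref{item:th:fin.ind.dscnt:fin.ind.l}, applied to the chain $\bA\le\bH\le\bU$, we have $\bU\overset{\circlearrowright}{/}\bH\iff\bU\overset{\circlearrowright}{/}\bA$, so it suffices to treat compact abelian Lie subgroups $\bA\le\bU$. Such an $\bA$ has identity component $\bA_0$ a torus, of finite index in $\bA$, and $\bA_0$ is sub-toral. \Cref{pr:fin.tor.loc.ntrl}\Cref{item:pr:fin.tor.loc.ntrl:subtor.fin} therefore gives $\bU\overset{\circlearrowright}{/}\bA_0$. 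Applying \Cref{th:fin.ind.dscnt}\Cref{item:th:fin.ind.dscnt:fin.ind.l} once more, now to $\bA_0\le\bA\le\bU$, yields $\bU\overset{\circlearrowright}{/}\bA$ and hence $\bU\overset{\circlearrowright}{/}\bH$. This needs nothing about $\bA$ being contained in a torus; the only input is that abelian compact Lie groups are virtually tori.

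For (2), let $\bU$ be virtually abelian. Every closed subgroup $\bH\le\bU$ is then virtually abelian, because its intersection with a finite-index abelian subgroup of $\bU$ has finite index in $\bH$. By (1), $\bU$ acts neutrally on every homogeneous space $\bU/\bH$. This is condition \Cref{pr:loc.ntrl}\Cref{item:pr:loc.ntrl:orb.ntrl}, so $\bU$ is locally neutral by \Cref{pr:loc.ntrl}. An alternative route applies \Cref{th:fin.ind.dscnt}\Cref{item:th:fin.ind.dscnt:fin.ind.r.glob} to a finite-index abelian subgroup of $\bU$ and then \Cref{pr:fin.tor.loc.ntrl}\Cref{item:pr:fin.tor.loc.ntrl:ab}.

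It remains to check that local neutrality is what \Cref{th:which.cpct.lie.gps.loc} requires. Condition \Cref{item:th:which.cpct.lie.gps.loc:vec} of that theorem asks that every compact-base vector bundle$_{\bU}$ be neutralizable by tensoring locally on $X$. Local neutrality provides exactly this: by \Cref{def:ntrl}, each bundle can be neutralized over the members of a finite invariant open cover. The equivalence of the conditions in \Cref{th:which.cpct.lie.gps.loc} then follows from the argument of \Cref{th:which.cpct.lie.gps}, run over such a cover.

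The main obstacle is this last identification of ``locally'' in \Cref{th:which.cpct.lie.gps.loc}\Cref{item:th:which.cpct.lie.gps.loc:vec} with the local-neutrality notion of \Cref{def:ntrl}, since the proof of \Cref{th:which.cpct.lie.gps.loc} was only said to adapt from that of \Cref{th:which.cpct.lie.gps}. Two points need checking. First, the neutralizing bundles $\cF$ must be non-zero on each piece. Second, the tube-neighbourhood reduction used in \Cref{pr:loc.ntrl} (\Cref{item:pr:loc.ntrl:orb.ntrl} $\Rightarrow$ \Cref{item:pr:loc.ntrl:mfld.ntrl}) must transfer to compact Hausdorff bases, which goes through pullback to Grassmannians as in \Cref{res:mtrz.as.vect}\Cref{item:res:mtrz.as.vect:plb.cpct}.
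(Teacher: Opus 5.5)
Your proof is correct and follows the paper's route: the sub-toral case comes from \Cref{pr:fin.tor.loc.ntrl}\Cref{item:pr:fin.tor.loc.ntrl:subtor.fin} and is lifted to virtually abelian ($=$ virtually toral) isotropy by \Cref{th:fin.ind.dscnt}\Cref{item:th:fin.ind.dscnt:fin.ind.l}, and part (2) follows from (1) together with \Cref{pr:loc.ntrl}, since closed subgroups of virtually abelian groups are virtually abelian (your two-step passage through $\mathbb{A}$ and then $\mathbb{A}_0$ is harmless, as one could go straight to the finite-index torus $\bH_0$). The worries in your last paragraph are moot, because the corollary only has to verify condition \Cref{item:th:which.cpct.lie.gps.loc:vec} of \Cref{th:which.cpct.lie.gps.loc} rather than re-derive that theorem's equivalences; moreover, non-zero-ness of $\cF$ is built into \Cref{def:ntrl}, and the passage to compact Hausdorff bases is already the \Cref{item:pr:loc.ntrl:mfld.ntrl} $\Rightarrow$ \Cref{item:pr:loc.ntrl:loc.ntrl} step of \Cref{pr:loc.ntrl}.
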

\begin{proof}
  Toral isotropy is delivered by \Cref{pr:fin.tor.loc.ntrl}\Cref{item:pr:fin.tor.loc.ntrl:subtor.fin}, whence also virtually abelian ($=$virtually toral for compact Lie groups) by \Cref{th:fin.ind.dscnt}\Cref{item:th:fin.ind.dscnt:fin.ind.l}. 
\end{proof}

\pf{cor:virt.ab.istrp}
\begin{cor:virt.ab.istrp}
  This follows from \Cref{cor:vrt.ab} in conjunction with \Cref{th:which.cpct.lie.gps.loc}.
\end{cor:virt.ab.istrp}

Aiming at assessing the extent to which bundles$_{\bU}$ are amenable to equivariant neutralization, we remind the reader the broad plan driving the proof of \cite[Lemma 12]{zbMATH03329610} (which crucially assumes a finite-complex base).

\begin{recollection}\label{rec:k.ntrl}
  Whether or not bundles$_{\bU}$ can be neutralized equivariantly will frequently reduce by various means to actions on finite $\bU$-CW complex bases, in which generality the present outline is applicable. ``Various'' indicates some dependence on context: with \Cref{res:mtrz.as.vect}\Cref{item:res:mtrz.as.vect:plb.cpct} operative, for instance, reducing the discussion to smooth $\bU$-actions on compact connected $C^{\infty}$ manifolds $X$, recall (\cite[\S I.3, p.16]{may_eq-hotop_1996}, \cite[Corollary 7.2]{MR696520}) that such $\bU$-spaces admit finite-$\bU$-CW structures. 

  \begin{enumerate}[(I), wide]
  \item\label{item:rec:k.ntrl:k} One first proves the desired result $K$-theoretically: for every bundle class $[\cE]\in K(X)$ there is an $a\in K(X)$ with $[\cE]a\in \bZ_{>0}\subseteq K(X)$. This follows from the nilpotence \cite[Corollary 3.1.6]{at_k_1967} of $[\cE]-\mathrm{rank}~\cE$, giving a power of $\mathrm{rank}~\cE$ that is a multiple of $[\cE]$ in $K(X)$. 
    
  \item\label{item:rec:k.ntrl:stab} $X$'s being a finite complex plays no role in the preceding portion of the argument. It rather enters the discussion at this point, where it is used to ensure that the class $b=[\cW]-n$, $n\in \bZ_{>0}\subseteq K(X)$ is realizable as that of an actual vector bundle $\cW'$ of rank $\mathrm{rank}~\cW-n$ via the homotopic stability of the unitary/orthogonal groups in ranks $\gg \dim X$ (``$\gg$'' meaning ``much larger than'').
  \end{enumerate}
\end{recollection}

\begin{remark}\label{re:suff.lrg.bdl}
  The homotopy-stability phenomenon alluded to in \Cref{rec:k.ntrl}\Cref{item:rec:k.ntrl:stab} amounts to the observation that sufficiently high-rank bundles over a finite complex $X$ contain sufficiently high-rank trivial subbundles: this follows from the isomorphisms
  \begin{equation*}
    [X,U(n)]
    \xrightarrow[\quad\cong\quad]{\quad}
    [X,U(n+1)]
    ,\quad
    n\gg 0
    ,\quad
    [-,-]:=\text{homotopy map classes}
  \end{equation*}
  (and analogues for orthogonal groups for real vector bundles), consequent on \emph{Puppe sequence} \cite[Theorem 6.42(1)]{dk_at_2001} resulting from the fibrations
  \begin{equation*}
    \begin{tikzpicture}[>=stealth,auto,baseline=(current  bounding  box.center)]
      \path[anchor=base] 
      (0,0) node (l) {$U(n)$}
      +(2.5,.5) node (u) {$U(n+1)$}
      +(4,-.5) node (r) {$\bS^{2n+1}$}
      ;
      \draw[right hook->] (l) to[bend left=6] node[pos=.5,auto] {$\scriptstyle $} (u);
      \draw[->>] (u) to[bend left=6] node[pos=.5,auto] {$\scriptstyle $} (r);
    \end{tikzpicture}    
  \end{equation*}
  This can also be phrased as maps into truncated vector-bundle classifying spaces stabilizing:
  \begin{equation}\label{eq:gqv.gqwv}
    \dim V\gg q\gg 0
    \xRightarrow{\quad}
    \forall W
    \left(
      \left[X,\bG(q,V)\right]
      \ni f
      \xmapsto[\quad\cong \quad]{\quad}
      f\oplus W
      \in
      \left[X,\bG(q+\dim W,V\oplus W)\right]
    \right),
  \end{equation}
  $\bG(q,\bullet)$ denoting $q$-plane Grassmannians. 
  
  No naive $\bU$-equivariant analogue can hold for arbitrary, possibly infinite $\bU$ (e.g. by simply substituting neutral bundles for trivial, or arbitrary representations for $W$ in \Cref{eq:gqv.gqwv}). Counterexamples with infinite isotropy can easily be produced using the fact that an infinite compact group has infinitely many irreducible representations.
\end{remark}

\Cref{rec:k.ntrl}'s \Cref{item:rec:k.ntrl:k} motivates the following notion, referencing the \emph{equivariant $K$-theory} rings $K_{\bU}(X)$ attached to an action $\bU\circlearrowright X$ familiar from \cite[\S 2]{seg}, \cite[Chapter XIV]{may_eq-hotop_1996}, etc.

\begin{definition}\label{def:k.ntrl}
  An action $\bU\circlearrowright X$ of a compact Lie group on a compact Hausdorff space is \emph{$K$-theoretically} (or \emph{$K$-})\emph{neutral} if all vector bundles$_{\bU}$ $\cE\xrightarrowdbl{} X$ admit classes $a\in K_{\bU}(X)$ with $[\cE]a\in K_{\bU}(*)\subseteq K_{\bU}(X)$, the representation ring $R(\bU)\cong K_{\bU}(*)$ \cite[\S 2, Example (i)]{seg}.
\end{definition}

In a follow-up to \Cref{cor:vrt.ab}, we have the following instance of \Cref{rec:k.ntrl}\Cref{item:rec:k.ntrl:k} for sufficiently well-behaved actions. 

\begin{theorem}\label{th:fin.istrp.loc.k}
  Compact-Lie-group actions $\bU\circlearrowright X$ with finite isotropy groups $\bU_x$, $x\in X$ on compact Hausdorff spaces are both locally neutral and $K$-neutral. 
\end{theorem}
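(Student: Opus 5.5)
Local neutrality is the easy half and comes straight from the results already in place. By Proposition \ref{pr:fin.tor.loc.ntrl}\Cref{item:pr:fin.tor.loc.ntrl:subtor.fin}, $\bU\overset{\circlearrowright}{/}\bU_x$ holds for every $x\in X$, because the isotropy groups are finite. The tube theorem \cite[Theorem I.5.7]{td_transf-gp} gives each orbit $\bU x$ an invariant neighbourhood $\bU\times_{\bU_x}S_x$ with an equivariant retraction onto $\bU/\bU_x$. After shrinking, any bundle$_{\bU}$ over that neighbourhood is pulled back from the orbit, hence from a $\bU_x$-representation. Neutralizing that representation as in Proposition \ref{pr:loc.ntrl}\Cref{item:pr:loc.ntrl:rep.ext} and pulling back neutralizes the bundle over the tube. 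Compactness of $X$ then yields finitely many such tubes. The pulled-back-from-orbit step needs a little care over non-smooth compact bases. I would handle it with the usual argument: a bundle over $\bU\times_{\bU_x}S_x$ restricted to the slice is a $\bU_x$-bundle, which is $\bU_x$-trivial near the fixed point $x$ by local triviality.

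For $K$-neutrality I would mimic \Cref{rec:k.ntrl}\Cref{item:rec:k.ntrl:k}, with the regular representation taking the place of the rank. Fix a vector bundle$_{\bU}$ $\cE$. The isotropy orders are bounded, by upper semicontinuity of $x\mapsto\bU_x$ together with compactness, and only finitely many conjugacy classes of isotropy occur. So I first choose a $\bU$-representation $\theta$ whose restriction to every $\bU_x$ is a multiple of the regular representation $\rho_{\bU_x}$. The existence of such a $\theta$ is what \cite[Theorem 9]{MR893156} was used for in Proposition \ref{pr:fin.tor.loc.ntrl}. To handle the finitely many isotropy types simultaneously, I take the tensor product of the $\theta$'s obtained for each type; this works because $\rho_H\otimes\sigma\cong\rho_H^{\oplus\dim\sigma}$. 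By Remark \ref{re:fin.loc.ntrl}, $\cE\otimes\theta_X$ then restricts over every orbit, as an $\bU_x$-bundle at $x$, to $\rho_{\bU_x}^{\oplus\dim\cE\cdot\dim\theta/|\bU_x|}$. This is the same as the restriction of $(\dim\cE)\cdot\theta_X$.

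Set $N:=\dim\cE$ (locally constant, so I treat each invariant clopen piece separately) and $y:=[\cE]\theta-N\theta\in K_{\bU}(X)$. Then $y$ vanishes on restriction to every orbit $\bU x$. The key step, and the main obstacle, is to show that such a class is nilpotent in $K_{\bU}(X)$. I would use the finite tube cover $X=V_1\cup\cdots\cup V_n$. On each $V_i$, equivariantly homotopy equivalent to an orbit, $y|_{V_i}=0$, so $y$ lifts to relative $K_{\bU}(X,V_i)$. The product of $n$ such classes then lies in $K_{\bU}(X,\bigcup V_i)=0$, as in \cite[Corollary 3.1.6]{at_k_1967}. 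If working with a non-smooth compact base makes the relative groups awkward, I would first reduce via the pullback of Remarks \ref{res:mtrz.as.vect}\Cref{item:res:mtrz.as.vect:plb.cpct}, replacing $X$ by a Grassmannian-type target. Finite isotropy is not preserved there, though, so I would instead keep $X$ and rely on Segal's representable equivariant $K$-theory on compact $\bU$-spaces \cite{seg}.

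Granting nilpotence, $y^m=0$. Since $N\theta$ and $[\cE]\theta$ commute, expanding $([\cE]\theta)^m=(N\theta+y)^m$ binomially shows that $(N\theta)^m$ is $[\cE]\theta$ times a polynomial in $[\cE]\theta$ and $N\theta$. Hence $[\cE]a=N^m\theta^{m}\in R(\bU)$ for some $a\in K_{\bU}(X)$. The class $N^m\theta^m$ again restricts to multiples of regular representations on isotropy groups, which gives the refinement promised in the abstract.
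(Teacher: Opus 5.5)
Your proposal is correct and follows essentially the paper's argument. Local neutrality comes from $\bU$ acting neutrally on its finite-isotropy orbits plus tubes. $K$-neutrality comes from tensoring with $\bU$-representations that restrict to multiples of regular representations on isotropy groups, then using nilpotence of classes vanishing on every orbit. Your single global $\theta$ and single difference class $[\cE]\theta-N\theta$ streamline the paper's tube-by-tube $\rho_{x,X}$ and product $\prod_x\left([\cE]-[\rho_{x,X}]\right)$. Your relative-$K$-theory cover argument reproves the nilpotence the paper takes from \cite[Proposition 5.1]{seg}.

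One correction: tubes over a general compact Hausdorff base need not be equivariantly homotopy equivalent to their orbits, since slices need not be contractible. Instead, take closed tubes small enough that $\cE\otimes\theta_X$ and $N\theta_X$ are both pulled back from the orbit, exactly as in your local-neutrality paragraph; then $y|_{V_i}=0$ holds outright.
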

\begin{proof}
  Local neutrality follows from \Cref{cor:vrt.ab} itself via \Cref{pr:loc.ntrl}'s equivalence \Cref{item:pr:loc.ntrl:loc.ntrl} $\Leftrightarrow$ \Cref{item:pr:loc.ntrl:orb.ntrl}, so the novelty of the claim is the $K$-neutrality the proof will in fact focus on.

  Note first that an action as described, with finite isotropy on a compact space, will automatically have finitely many \emph{orbit types} (conjugacy classes of stabilizers $\bU_x$: \cite[post Corollary I.4.4]{bred_cpct-transf}); this follows from $X$ admitting a finite cover by tubes, together with isotropy groups $\bU_y$ for $y$ nearby $x$ being conjugate to subgroups of $\bU_x$ \cite[Theorem II.5.4, Corollary II.5.5]{bred_cpct-transf} in sufficiently small tubes.

  We recall next that per the proof of \cite[Theorem 9]{MR893156} (appealed to once before), \emph{any} faithful representation $\rho:\bG\le \bU(m)$ of a finite group admits a \emph{rational Schur functor}\footnote{The phrase \emph{rational} is meant here to contrast with ``regular'' or ``polynomial'', by analogy to familiar algebraic-geometric language: plain \emph{Schur functors} $\bS_{\lambda}$ are typically \cite[\S 6.1]{fh_rep-th} indexed by partitions $\lambda=(\lambda_1\ge \cdots\ge \lambda_m\ge 0)$ and provide \emph{some} of the representations of $U(m)$ when applied to the defining $m$-dimensional representation $\rho:U(m)\circlearrowright V$, but must be supplemented by tensoring with possibly negative powers of the determinant representation $\bigwedge^m V$ to recover the rest. This produces what $\cite[\S 15.5]{fh_rep-th}$ denotes by $\Psi_{\lambda}$ for $\lambda=(\lambda_1\ge \cdots\ge \lambda_n)$ (no positivity assumptions), and what the text above would render as $\bS_{\lambda}\rho$ instead.} $\bS_{\lambda}\rho$ commensurable with the regular representation $\rho_{\bG}$. Thus:
  \begin{itemize}[wide]
  \item cover $X$ with finitely many tubes about orbits $\bU x\cong \bU/\bU_x$;
  \item for each of the finitely many selected points $x\in X$ pick a representation $\rho_x\in \Rep(\bU)$ with regular-commensurable restriction $\rho_x|_{\bU_x}$;
  \item and tensor $\cE$ with all $\rho_{x,X}$.     
  \end{itemize}
  This will produce a bundle$_{\bU}$ with regular-commensurable isotropy representations
  \begin{equation*}
    \bU_y\circlearrowright \left(\cE\otimes \bigotimes_{x}\rho_{x,X}\right)_y
    ,\quad
    y\in X
  \end{equation*}
  by the regular representation's absorption property (\Cref{re:fin.loc.ntrl}), and that same property ensures that that feature will not alter by further tensoring with \emph{any} vector bundle$_{\bU}$. In summary: every $\bU_x\circlearrowright \cE_x$ can be assumed commensurable with the regular representation $\rho_{\bU_x}$. 

  We now recycle the notation and setup (closed tubes $X_x$ attached to finitely many $x$, etc.). Some $K_{\bU}$ class of the form $m[\cE]-n[\rho_{x,X}]$ vanishes at every $y\in X_x$ if every $\rho_x\in \Rep(\bU)$ has $\rho_x|_{\bU_x}\sim \rho_{\bU_x}$, so after positive-integer scaling (perhaps by distinct positive integers over finitely many members of a clopen $X$-cover) we can assume \cite[Proposition 5.1]{seg}
  \begin{equation*}
    \forall x
    \bigg([\cE]-[\rho_{x,X}]\in K_{\bU}(X_x)\text{ is nilpotent}\bigg).
  \end{equation*}
  This provides a globally nilpotent class $\prod_x \left([\cE]-[\rho_{x,X}]\right)$, and hence a neutral multiple $\left[\bigotimes_x \rho_{x,X}\right]$ of $[\cE]$ in $K_{\bU}(X)$. 
\end{proof}

Note also the following consequence of the proof of \Cref{th:fin.istrp.loc.k}, amplifying its statement.

\begin{corollary}\label{cor:k.ntrl.lrg.rep}
  Let $\bU\circlearrowright X$ be a finite-isotropy compact-Lie action on a compact $T_2$ space and $\cE\xrightarrowdbl{}X$ a vector bundle$_{\bU}$.

  There are $a\in K_{\bU}(X)$ and $\rho\in \Rep(\bU)$ with $[\cE]a=[\rho_X]$ and all  
  \begin{equation*}
    a|_x\in K_{\bU_x}(\{x\})\cong R(\bU_x)
    ,\quad
    \rho\in R(\bU)
  \end{equation*}
  arbitrarily large with respect to the usual \cite[post Corollaire 2]{MR838399} representation-ring ordering induced by the positive cone
  \begin{equation*}
    \left\{\text{characters}\right\}
    =:
    R_+(\bullet)
    \le
    R(\bullet)
    :=
    \left\{\text{virtual characters}\right\}
    ,\quad
    \left(\bullet\text{ compact Lie}\right).
  \end{equation*}
\end{corollary}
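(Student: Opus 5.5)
Rerun the proof of \Cref{th:fin.istrp.loc.k}, tracking the neutralizing class $a$ and the representation $\rho$ it produces. The aim is to show both can be inflated at will without breaking $[\cE]a=[\rho_X]$.

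First, the basic identity. Fix finitely many points $x$ whose tubes $X_x$ cover $X$, and representations $\rho_x\in\Rep(\bU)$ with $\rho_x|_{\bU_x}$ commensurable with $\rho_{\bU_x}$. After tensoring $\cE$ with $\bigotimes_x\rho_{x,X}$ and scaling, the proof of \Cref{th:fin.istrp.loc.k} makes every $[\cE]-[\rho_{x,X}]$ nilpotent on $X_x$. Set $N:=\prod_x\left([\cE]-[\rho_{x,X}]\right)$; it is globally nilpotent. Expanding $N^k=0$ writes a monomial in the $[\rho_{x,X}]$ as $[\cE]$ times a polynomial in $[\cE]$ and the $[\rho_{x,X}]$. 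Fold the preliminary tensoring factor into that polynomial. This gives $[\cE]a_0=[\rho^{(0)}_X]$ for some $a_0\in K_{\bU}(X)$ and $\rho^{(0)}\in\Rep(\bU)$ an honest representation, namely a tensor product of the $\rho_x$ and the auxiliary ones.

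Second, inflation. For any $\psi\in\Rep(\bU)$, replacing $(a_0,\rho^{(0)})$ by $(a_0\psi_X,\rho^{(0)}\otimes\psi)$ preserves the identity. Taking $\psi$ large in $R_+(\bU)$, say a high tensor power of a faithful representation summed with an arbitrary target $\sigma$, makes $\rho$ dominate any prescribed element of $R(\bU)$. Indeed $\rho^{(0)}\otimes\psi-\sigma\in R_+$ once $\psi\supseteq\sigma$, since $\rho^{(0)}$ contains at least the trivial-dimension part. More carefully, choose $\psi=\sigma\otimes\tau$ with $\rho^{(0)}\otimes\tau\ni\mathbf 1$, which is possible by taking $\tau=\rho^{(0)*}$.

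The main obstacle is controlling the local restrictions $a|_x\in R(\bU_x)$, since $a_0$ is only a virtual class. Here use the finite isotropy and the regular absorption of \Cref{re:fin.loc.ntrl}. First, $[\cE]|_x$ is commensurable with $\rho_{\bU_x}$, so $[\cE]|_x=c_x\,\rho_{\bU_x}$ rationally, with $c_x>0$. Second, evaluating $[\cE]a=[\rho_X]$ at $x$ gives $c_x\rho_{\bU_x}\cdot a|_x=\rho|_{\bU_x}$. Because $\rho_{\bU_x}\cdot b=\dim(b)\rho_{\bU_x}$, this only pins down $\dim a|_x$; the non-regular part of $a|_x$ is unconstrained. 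So the strategy is to further multiply $a$ by $\psi_X$ with $\psi|_{\bU_x}$ commensurable with a large multiple of $\rho_{\bU_x}$ for each of the finitely many $x$, for instance $\psi=\left(\bigotimes_x\rho_x\right)^{\otimes m}$. That forces $a|_y$, for every $y$ (whose stabilizer is subconjugate to some $\bU_x$), to be a virtual class times a regular-commensurable one, hence a rational multiple $\dim(a|_y)/|\bU_y|$ of $\rho_{\bU_y}$.

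It remains to see that this dimension is large and positive. The dimension equals $\dim\rho/\mathrm{rank}\,\cE$, which grows with $\psi$. Hence $a|_y$ is arbitrarily large in $R(\bU_y)$, uniformly over the finitely many orbit types. This last uniformity, via finitely many orbit types and upper semicontinuity of isotropy, is the step needing the most care.
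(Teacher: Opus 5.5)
Your proposal is correct and follows essentially the same route as the paper, which likewise multiplies both $a$ and $[\rho_X]$ by $[\eta_X]$ for some $\eta\in\Rep(\bU)$ (your $\left(\bigotimes_x\rho_x\right)^{\otimes m}$ works) that restricts to multiples of the regular representations on all isotropy groups, so that each $a|_y$ becomes a positive multiple of $\rho_{\bU_y}$; you supply the positivity detail the paper leaves implicit, namely $\dim a|_y=\dim\rho/\mathrm{rank}\,\cE_y$. Two small points: to get both largeness conditions at once, do the regular-commensurable multiplication before the one inflating $\rho$, or use $\psi\otimes\psi^*$, since multiplying $\rho\ge\sigma$ by a $\psi$ not containing $\mathbf 1$ can destroy the domination, whereas an honest representation times a multiple of $\rho_{\bU_y}$ remains such a multiple; and the uniformity you flag as delicate is immediate, because ranks of $\cE$ and the orders $|\bU_y|$ are bounded on compact $X$ with finitely many orbit types.
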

\begin{proof}
  As mentioned, this is implicit in the proof of \Cref{th:fin.istrp.loc.k} (rather than the statement): for $\rho$ there is of course no issue (for one can always tensor further by arbitrarily large representations), while the restrictions $a|_x$ can be chosen so as to dominate arbitrarily high multiples of the regular representations $\rho_{\bU_x}$ of the (finitely many, up to conjugacy) isotropy groups $\bU_x$. In fact, we can arrange for $a|_x$ to \emph{be} such multiples:
  \begin{equation}\label{eq:ax.lrg}
    a|_x\cong n_x[\rho_{\bU_x}]
    ,\quad
    n_x\ge n\gg 0:
  \end{equation}
  simply multiply both $a$ and $[\rho_X]$ by $[\eta_X]$ for suitable $\eta\in \Rep(\bU)$.
\end{proof}


\addcontentsline{toc}{section}{References}

\def\polhk#1{\setbox0=\hbox{#1}{\ooalign{\hidewidth
  \lower1.5ex\hbox{`}\hidewidth\crcr\unhbox0}}}


\Addresses

\end{document}